\newtheorem{thm}{Theorem}[section]
\newtheorem{cor}[thm]{Corollary}
\newtheorem{lem}[thm]{Lemma}
\newtheorem{conj}[thm]{Conjecture}
\newtheorem{rmk}[thm]{Remark}
\newtheorem{definition}[thm]{Definition}
\newtheorem{proposition}[thm]{Proposition}
\newtheorem{example}[thm]{Example}
\newcommand{\kso}{s^{(1)}}
\newcommand{\kst}{s^{(2)}}
\newcommand{\bmt}{\bm{T}}
\newcommand{\bmlambda}{{\bm{\lambda}}}
\newcommand{\bmmu}{{\bm{\mu}}}
\newcommand{\bmeta}{\bm{\eta}}
\DeclareMathOperator{\Inv}{Inv}
\DeclareMathOperator{\id}{ID}
\DeclareMathOperator{\inv}{inv}
\DeclareMathOperator{\des}{Des}
\DeclareMathOperator{\ssty}{SSTY}
\DeclareMathOperator{\sty}{STY}
\title{Linear relations on LLT polynomials and their k-Schur positivity for k=2 }
\author{Seung Jin Lee}
\address{Department of Mathematical Sciences \\ Research institute of Mathematics \\ Seoul National University \\ Gwanak-ro 1, Gwanak-gu \\ Seoul 151-747, Republic of Korea}
\email{lsjin@snu.ac.kr}
\begin{document}
\begin{abstract}
LLT polynomials are $q$-analogues of product of Schur functions that are known to be Schur-positive by Grojnowski and Haiman. However, there is no known combinatorial formula for the coefficients in the Schur expansion. Finding such a formula also provides Schur positivity of Macdonald polynomials.
On the other hand, Haiman and Hugland conjectured that LLT polynomials for skew partitions lying on $k$ adjacent diagonals are $k$-Schur positive, which is much stronger than Schur positivity. In this paper, we prove the conjecture for $k=2$ by analyzing unicellular LLT polynomials. We first present a linearity theorem for unicellular LLT polynomials for $k=2$. By analyzing linear relations between LLT polynomials with known results on LLT polynomials for rectangles, we provide the $2$-Schur positivity of the unicellular LLT polynomials as well as LLT polynomials appearing in Haiman-Hugland conjecture for $k=2$.
\end{abstract}
\maketitle
\section{Introduction}

LLT polynomials are certain family of symmetric functions indexed by $d$-tuple of skew partitions, introduced by Lascoux, Leclerc, and Thibon \cite{LLT} in the study of quantum affine algebras and unipotent varieties. Later Haglund, Haiman and Loehr \cite{HHL05} proved that Macdonald polynomials are positive sums of LLT polynomial indexed by $d$-tuple of ribbons. Grojnowski and Haiman \cite{GH07} proved that LLT polynomials are Schur-positive using Kazhdan-Lusztig theory. However, their proof does not provide a manifestly positive formula so finding combinatorial formulas for expansions of Macdonald polynomials and LLT polynomials remains a wide open problem. The best known result is the formula for $d=3$ due to Blasiak \cite{Bla}. See \cite{Bla} for more history about LLT polynomials.\\

In his 2006 ICM talk, Haiman announced a conjecture made by Haiman and Hugland stating that (conjugate of) LLT polynomials indexed by $d$-tuple of skew partition that lies in $k$-adjacent diagonals are $k$-Schur positive (Theorem \ref{haiman}), which is much stronger than Schur positivity. In this paper, we prove the conjecture for $k=2$. The proof is divided into two steps: we first prove that there are linear relations between (unicellular) LLT polynomials, which is enough to determine all unicellular LLT polynomials with $k=2$ from LLT polynomials indexed by dominos. Then we prove that LLT polynomials indexed by dominos are power of $q$ times a $2$-Schur function by using the fact that LLT polynomials indexed by rectangles are the same as generalized Hall-Littlewood polynomials, proved in \cite{GH07}. The main theorem (Theorem \ref{convthm}) shows that unicellular LLT polynomials with $k=2$ are positive sums of $2$-Schur functions where the exponents of $q$ change linearly as the index set of unicellular LLT polynomials change, providing a very nice formula.\\

It is worth nothing that there is a plethysm relation between unicellular LLT polynomials and chromatic quasisymmetric functions, first found in \cite{CM}. Therefore linear relations between unicellular LLT polynomials also hold for corresponding chromatic quasisymmetric functions. Chromatic quasisymmetric functions for the case $k=2$ we considered are studied by Cho and Huh \cite{CH}, and by Harada and Precup \cite{HP}.\\

The structure of the paper is as follows: In Section 2, we define LLT polynomials and state Haiman-Hugland conjecture precisely. In Section 3 and 4, we present useful linear relations and prove the linearity theorem (Theorem \ref{convthm}). In Section 5 we compare LLT polynomials indexed by dominos and $2$-Schur function. In Section 6 we prove Haiman-Hugland conjecture for $k=2$.

\section{Preliminary}
LLT polynomials are certain $q$-analogs of products of skew Schur functions defined by Lascoux, Leclerc, and Thibon \cite{LLT}. Below we give an alternative definition presented in \cite{HHL05}. Let $\bm{\lambda}= (\lambda^{(0)},\lambda^{(1)}, \ldots, \lambda^{(d-1)})$ be a $d$-tuple of (skew) tableaux. Define
\begin{align*}
\ssty_d({\bm \lambda}) &= \{\text{semi-standard }d\text{-tuples of tableaux of shape } \bm{\lambda}   \}.\\
\sty_d({\bm \lambda}) &= \{\text{standard }d\text{-tuples of tableaux of shape } \bm{\lambda}   \}.
\end{align*}

If $\bm{T}=(T^{(0)}, \ldots, T^{(d-1)}) \in \ssty_d(\bm{\lambda})$ has entries $1^{\pi_1},2^{\pi_2}, \ldots$, then we say that $\bm{T}$ has shape $\bm{\lambda}$ and weight $\pi=(\pi_1,\pi_2,\ldots)$. For a $d$-tuple of skew shapes $(\lambda^{(0)},\lambda^{(1)}, \ldots, \lambda^{(d-1)})$, define the shifted content of a cell $x$ by
$$\tilde{c}(x)= d \cdot c(x)+ i$$
where $x$ is a cell of $\lambda^{(i)}$, where $c(x)$ is the usual content of $x$ regarded as a cell of $\lambda^{(i)}$. For $\bm{T} \in \ssty_d(\bm{\lambda})$, let $\bmt(x)$ be the entry of the cell $x$ in $\bmt$. Define the set of inversions of $\bmt$ by
$$\Inv_d(\bmt)= \{(x,y) \mid d> \tilde{c}(y)- \tilde{c}(x)>0 \text{ and } \bmt(x)>\bmt(y) \},$$
and the inversion number of $\bmt$ is given by $|\Inv_d(\bmt)|$, denoted by $\inv_d(\bmt)$.\\

By \cite{HHL05}, the LLT polynomial 
$\tilde{G}_{\bmlambda}[X;q]$ is given by
$$\tilde{G}_{\bmlambda}[X;q]=\sum_{\bmt \in \ssty_d(\bmlambda)} q^{\inv_d(\bmt)}x^{\bmt},$$
where $x^{\bmt}$ is the monomial $x_1^{\pi_1}x_2^{\pi_2}\cdots$ when $\bmt$ has weight $\pi$.\\

Define the \emph{content reading word} of a $d$-tuple of tableaux to be the word obtained by reading entries in increasing order of shifted content. For a word $\bm{v}=v_1v_2 \ldots v_n$, define the descent set $\des(v) := \{ i \mid v_i>v_{i+1} \}$ of $v$, and define $\des(\bmt)$ by the descent set of the content reading word of $\bmt$. Then one can write LLT polynomial in terms of Gessel's fundamental quasisymmetric functions:
$$\tilde{G}_{\bmlambda}[X;q]= \sum_{\bmt \in \sty_d(\bmlambda)} q^{\inv_d(\bmt)} F_{\des(\bmt)}.$$

It is known by Grojnowski and Haiman \cite{GH07} that LLT polynomials are Schur-positive. Moreover, Haiman and Haglund conjectured the following stronger statement.

\begin{conj}\label{haiman}
Let $\omega$ be the conjugate map on the symmetric function ring. Then for an integer $p$ and $k>0$ such that for any cell $x$ in $\bmlambda$ the content $c(x)$ satisfies $p\leq c(x) <p+k$ then $\omega \tilde{G}_{\bmlambda}[X;q]$, the conjugate of the LLT polynomial for $\bmlambda$, is $k$-Schur positive.
\end{conj}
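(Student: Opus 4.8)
The plan is to mirror the two-step architecture that succeeds for $k=2$ and push each step to general $k$. First I would reduce the general (skew) case of Conjecture \ref{haiman} to the unicellular case. The idea is that any $d$-tuple $\bmlambda$ of skew shapes confined to the $k$ diagonals $p\le c(x)<p+k$ can be resolved, via standardization and expansion identities for LLT polynomials, into a combination of unicellular LLT polynomials with coefficients in $\mathbb{N}[q]$, where the constituent cells still occupy only those $k$ diagonals. Since $k$-Schur positivity is preserved under nonnegative-in-$q$ combinations and under $\omega$, it suffices to prove the unicellular case. This reduction is the content of the final section for $k=2$; the general version requires checking that the expansion never forces a cell outside the prescribed diagonal band, a bookkeeping argument on the shifted contents $\tilde{c}(x)=d\cdot c(x)+i$.

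Second, and at the heart of the proposal, I would set up a reduction system on unicellular LLT polynomials in $k$ diagonals. The key tool is the family of modular (three-term) linear relations among unicellular LLT polynomials, the same relations underlying the plethysm with chromatic quasisymmetric functions of Carlsson--Mellit referenced in \cite{CM}. Each relation swaps a pair of cells between neighboring diagonals (or components) and rewrites one configuration as a short $\mathbb{Z}[q]$-combination of others. For $k=2$ these relations are strong enough to reduce every unicellular polynomial to the domino-indexed ones (the linearity theorem). For general $k$ the plan is to show the analogous relations straighten an arbitrary configuration toward a canonical \emph{rectangular} family, in which the cells occupying each fixed diagonal are organized into the columns of rectangles.

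Third, the base case. For the rectangular family I would invoke the identification, due to Grojnowski--Haiman \cite{GH07}, of rectangle-indexed LLT polynomials with generalized Hall--Littlewood polynomials. Because $k$-Schur functions were originally defined, by Lapointe--Lascoux--Morse, through Hall--Littlewood data, the conjugate $\omega$ of such a rectangular LLT polynomial is, up to a power of $q$, a single $k$-Schur function $s^{(k)}_{\mu}$, giving manifest $k$-Schur positivity of every target of the reduction. Chaining the three steps then expresses $\omega\tilde{G}_{\bmlambda}[X;q]$ as a nonnegative-in-$q$ combination of $k$-Schur functions.

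The hard part will be the second step, specifically preserving positivity through the straightening. The modular relations carry signs, so even though each individual rewrite is valid there is no a priori reason the net coefficient of each rectangular term lands in $\mathbb{N}[q]$; for $k=2$ this positivity is precisely the nontrivial content of the linearity theorem. For $k\ge 3$ one must (i) prove the reduction system is confluent and terminating, so that a well-defined normal form in the rectangular family exists, and (ii) show the resulting coefficients are positive. Problem (ii) is the genuine obstruction, essentially as subtle as the conjecture itself, and I would attack it by seeking a sign-reversing involution or an explicit crystal/charge model realizing the straightening combinatorially, so that positivity becomes visible term by term rather than only after cancellation.
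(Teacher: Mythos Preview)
Your proposal faithfully mirrors the paper's $k=2$ architecture, and you correctly flag the main obstruction in step~(ii): for $k\ge 3$ there is no known analogue of the linearity theorem guaranteeing that the coefficients produced by straightening land in $\mathbb{N}[q]$. Since you already concede this is ``essentially as subtle as the conjecture itself,'' the proposal is a research programme rather than a proof, and that is appropriate given that the paper itself only establishes $k=2$.

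Two further gaps deserve emphasis. First, your step~1 is stated backwards relative to what the paper does. Lemma~\ref{4.8} writes a \emph{unicellular} LLT polynomial as an $\mathbb{N}[q]$-sum of \emph{domino}-containing ones, not the reverse; inverting this introduces signs. The paper never reduces the skew case to the unicellular case by positive expansion. Instead, Theorem~\ref{haiman2} shows that the general $k=2$ LLT polynomial is a \emph{sub-sum} of the unicellular formula in Theorem~\ref{convthm}, with the same $f_{I,m}$ but the index set $I$ restricted. That only works because one already has the explicit linearity formula with manifestly positive pieces $f_{I,m}$; it is not a free-standing reduction.

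Second, and more seriously, you overstate the base case in step~3. The assertion that $\omega$ of a rectangle-indexed LLT polynomial is, up to a $q$-power, a single $k$-Schur function is exactly what makes $k=2$ tractable: the relevant rectangles are dominos, and Theorem~\ref{krec} (the $k$-rectangle property) together with Theorem~\ref{gh} pins down $\omega(\tilde G_{\bmeta_a})$ as $q^{M}s^{(2)}_{2^\ell 1^{n-2\ell}}$. For $k\ge 3$ the analogous identification is not available in the needed form, and the paper's concluding remarks explicitly warn that the rectangle/Hall--Littlewood comparison ``doesn't seem to help much for $k>2$,'' pointing instead to ribbon-indexed LLT polynomials, whose Schur expansion is itself an open problem. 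So even granting confluence and termination in your step~2, the targets of your reduction are not known to be $k$-Schur positive for $k\ge 3$.
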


\begin{rmk} \label{1} The easiest case is $k=1$, where LLT polynomials are determined by the number $n$ of cells in $\bmlambda$. In this case, $\omega \tilde{G}_{\bmlambda}[X;q]$ is equal to the Hall-Littlewood polynomial $H_{1^n}[X,q]$ which is also a $1$-Schur function $\kso_{1^n}$. 
\end{rmk}
There are two different ways of defining LLT polynomials that Grojnowski and Haiman \cite{GH07} showed that they are essentially the same. In \cite{GH07}, the LLT polynomial defined above is called the new variant combinatorial LLT polynomial. Another one is called the combinatorial LLT polynomial $G'_{\mu^{(1)}/\mu^{(2)}}[X;u]$ defined in terms of the ribbon tableau generating function in a skew shape $\mu^{(1)}/\mu^{(2)}$ such that the power of $u$ encodes the spin statistics of the ribbon tableau, defined by Lascoux, Leclerc and Thibon \cite{LLT}. Grojnowski and Haiman showed in \cite[Proposition 6.17]{GH07} that a new variant combinatorial LLT polynomial for $\bmlambda$ is the same as a certain power of $u$ times the combinatorial LLT polynomial $G'_{\mu^{(1)}/\mu^{(2)}}[X;u^{-1}]$ for certain skew shape $\mu^{(1)}/\mu^{(2)}$ where $u^2=q$. Haiman announced the conjecture in ICM 2006 talk that the combinatorial LLT polynomials are $k$-Schur positive, and this conjecture is equivalent to Conjecture \ref{haiman}. It seems that Haiman and Haglund contributed together to build the conjecture around 2004.\\

\section{Unicellular LLT polynomials}
In this section, we consider unicellular LLT polynomials, i.e. $\lambda^{(i)}$ consists of one box with content $c_i$ for some $c_i \in \mathbb{Z}$ for all $0\leq i \leq d-1$. For the rest of this section, we assume that $\bmlambda$ satisfies the above condition unless stated otherwise. For such a $\bmlambda$, one can associate a permutation $w_{\bmlambda}$ in $S_d$ defined by
$$w_{\bmlambda}(i+1)= |\{j \mid \tilde{c}_j<\tilde{c}_i  \}| +1,$$
for $0\leq i \leq d-1$ where $\tilde{c}_i=d\cdot c_i+i$.

Let $n$ denote the number of boxes in $\bmlambda$, and in this case $n$ is equal to $d$.
For a positive integer $i \leq n$, define the number $f(i)$ by the cardinality of the set 
$$\{j \mid \tilde{c}_j<\tilde{c}_{w_\bmlambda(i)}-d\}.$$
Note that we have $0\leq f(i) \leq i-1$. Moreover, if $i<j$ then $f(i)\leq f(j)$ by definition.\\

For $\bmlambda$, one can associate a partition $\lambda$ contained in a staircase shape $(n-1,n-2,\ldots,1)$ defined by $\lambda_i=f(n-i+1)$. One can compute the LLT polynomial $\tilde{G}_{\bmlambda}[X;q]$ in terms of $\lambda$.\\

 Let $D_\lambda$ be the set of cells in $\lambda$. For a permutation $v$ in $S_n$ and a set $D$ of some boxes satisfying $1\leq i < j \leq n$, define $inv(v,D)$ by the number of inversions $(p,q)$ of $v$ such that $(p,q)\in D$. Then the following holds by definition of $D_\lambda$.

\begin{proposition}\label{D} $\tilde{G}_{\bmlambda}[X;q]$ is the same as
\begin{align}\label{Dlambda}\sum_{v\in S_n} q^{inv(v,D_\lambda)} F_{D(v^{-1})}.
\end{align}
\end{proposition}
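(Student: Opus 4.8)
The plan is to obtain \eqref{Dlambda} directly from the quasisymmetric expansion $\tilde{G}_{\bmlambda}[X;q]=\sum_{\bmt\in\sty_d(\bmlambda)}q^{\inv_d(\bmt)}F_{\des(\bmt)}$ recorded in Section~2, by making the bijection $\sty_d(\bmlambda)\cong S_n$ explicit and matching the two statistics on the nose. Since $\bmlambda$ is unicellular, a standard $d$-tuple $\bmt\in\sty_d(\bmlambda)$ is just a bijection from the $n$ cells of $\bmlambda$ to $\{1,\dots,n\}$; listing the cells in increasing order of shifted content — the ordering recorded by $w_{\bmlambda}$ — and reading off the entries of $\bmt$ gives the content reading word, and I attach to $\bmt$ the permutation $v\in S_n$ whose inverse is this word. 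This is a bijection $\sty_d(\bmlambda)\to S_n$, and because $\des(\bmt)$ is by definition the descent set of the content reading word, we automatically have $F_{\des(\bmt)}=F_{D(v^{-1})}$.

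The content of the proposition is the equality $\inv_d(\bmt)=inv(v,D_\lambda)$. An inversion of $\bmt$ is an ordered pair of cells $(x,y)$ with $0<\tilde c(y)-\tilde c(x)<d$ and $\bmt(x)>\bmt(y)$; when the cells are indexed by shifted content these are exactly the inversions $(p,q)$, $p<q$, of the content reading word whose two cells lie within shifted-content distance $d$. Thus $\inv_d(\bmt)$ is the number of inversions of $v$ lying in the set $E$ of in-range position pairs, and it remains to check $E=D_\lambda$. This is precisely what the definition of $f$ — hence of $\lambda$ and $D_\lambda$ — is arranged to achieve once everything is transported through $w_{\bmlambda}$: the stated properties $0\le f(i)\le i-1$ and $f(i)\le f(j)$ for $i<j$ are exactly the conditions that make the row lengths $\lambda_i=f(n-i+1)$ weakly decreasing and contained in the staircase $(n-1,\dots,1)$, so that $D_\lambda$ is a genuine Young diagram; granting $E=D_\lambda$, substituting the two identifications into the $F$-expansion gives \eqref{Dlambda}.

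The step I expect to demand actual care, as opposed to routine unwinding, is the verification $E=D_\lambda$: it is not self-evident a priori that the collection of in-range position pairs becomes, after the relabelling by $w_{\bmlambda}$, a left-justified diagram with weakly decreasing rows — i.e.\ literally a partition shape — and this is where the monotonicity of $f$ is essential, and where the competing indexing conventions ($0$- versus $1$-indexing of the cells, $w_{\bmlambda}$ versus its inverse, and the reversal built into $\lambda_i=f(n-i+1)$) have to be kept straight.
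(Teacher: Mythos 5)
Your reduction is the right one, and it is in substance all the paper itself offers (the paper dismisses the proposition with ``holds by definition of $D_\lambda$''): pass to the expansion $\tilde{G}_{\bmlambda}=\sum_{\bmt\in\sty_d(\bmlambda)}q^{\inv_d(\bmt)}F_{\des(\bmt)}$, identify a standard filling with the permutation whose inverse is the content reading word so that $F_{\des(\bmt)}=F_{\id(v)}$, and observe that $\inv_d(\bmt)$ counts inversions of that word at the set $E$ of position pairs whose shifted contents differ by less than $d$. The descent-set half is indeed automatic.

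The step you defer, however --- the identification $E=D_\lambda$ --- is the entire content of the proposition, and as you have written it it is not just unverified but fails under the literal reading of $D_\lambda$ as the cell set of $\lambda$. By definition $f(q)$ counts the cells whose shifted content lies \emph{more than} $d$ below that of the cell of rank $q$, i.e.\ the \emph{out-of-range} pairs $(p,q)$ with $p\le f(q)$; hence $|\lambda|=\sum_q f(q)$ is the number of out-of-range pairs and $|E|=\binom{n}{2}-|\lambda|$, so $E$ cannot equal a set of $|\lambda|$ cells. Concretely, for $n=d=2$ with both cells of content $0$ one gets $\lambda=\emptyset$ and $\tilde{G}_{\bmlambda}=F_{\emptyset}+qF_{\{1\}}$, while $\sum_{v}q^{\inv(v,\emptyset)}F_{\id(v)}=F_{\emptyset}+F_{\{1\}}$. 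What is actually true, and what your argument needs, is that $E$ is the complement of $\lambda$ inside the staircase $(n-1,\dots,1)$ under the natural labelling of staircase cells by pairs $(i,j)$ with $i<j$ --- equivalently, $E$ is the support of the $0$--$1$ matrix $M_\lambda$ introduced right after the proposition, whose zeros occupy the first $\lambda_i$ admissible entries of row $i$. This is also forced by the relation $q^{\binom{n}{2}-|\lambda|}L(n,\lambda;q^{-1})=G_{\bmlambda}[X;q]$, since the top $q$-degree of $G_{\bmlambda}$ is $|E|$. So you must either carry out the verification with this complementation (the monotonicity $f(i)\le f(j)$ then shows the out-of-range pairs do form a partition shape, which is the only remaining point) or make explicit a nonstandard embedding of the cells of $\lambda$ into pairs; as it stands, ``granting $E=D_\lambda$'' grants something false.
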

For the rest of the paper, we denote the inverse descent set $D(v^{-1})$ by $\id(v)$. If $\bmlambda$ is a tuple of single cells, we denote $\tilde{G}_{\bmlambda}[X;q]$ by $G_{\lambda}[X;q]$.
\begin{rmk}
All cells of $\bmlambda$ lie in two adjacent diagonals if there exists a positive integer $m\leq n$ such that $\lambda$ is contained in a rectangle $m \times (n-m)$. Conversely, if $\lambda$ is contained in $m\times (n-m)$ for some $m$, then there is $d$-tuple of cells $\bmmu$ lying in two adjacent diagonals such that the corresponding partition $\mu$ is equal to $\lambda$.
\end{rmk}
Let $L(n,\lambda;q)$ denote the conjugate of $G_{\bmlambda}[X;q]$. One can show that $q^p \cdot L(n,\lambda;q^{-1})=G_{\bmlambda}[X;q]$, where $p$ is $\frac{n(n-1)}{2} - |\lambda|$ from \cite[Lemma 6.12]{GH07} and \cite[Proposition 6.17]{GH07}. We denote $L(n,\lambda;q)$ by $L(n,\lambda)$ in short. \\

One can generalize the formula (\ref{Dlambda}) in the following way. For a permutation $w$ in $S_n$ and a $n \times n$ matrix $M=(m_{ij})$ with $m_{ij}=0$ unless $i<j$, define an inversion number of $w$ with respect to $M$ defined by $$\sum_{1\leq i <j \leq n} m_{ij}\cdot \chi\left( (i,j) \in \Inv(w)\right)$$
and we denote by $\inv(w,M)$. It is clear that $\inv(w,M)$ is the usual inversion number if $m_{ij}=1$ for all $i<j$, and it is the same as the major if $m_{i,i+1}=i$ for all $i$ and 0 otherwise. For the rest of this paper, the matrix $M$ satisfies the following condition: if $m_{ij}=0$ for some $i<j$, then $m_{kl}=0$ for any $k<i<j<l$. We call this condition $(*)$.\

\begin{definition} For such a matrix $M$, define a function $G_M[X;q]$ defined by
$$\sum_{w \in S_n} q^{\inv(w,M)} F_{\id(w)}.$$
\end{definition}

A relationship between $G_M[X;q]$ and unicellular LLT polynomials is as follows. If all entries of $M$ are either 1 or 0 and satisfies $(*)$, then a function $G_M[X;q]$ is the unicellular LLT polynomial $G_{\lambda}[X;q]$ by Theorem \ref{D}, where $\lambda_i$ is the maximum of $\text{max}\{ j \mid m_{il}=0 \text{ for all } l \leq j\}$ and $n+1-i$.\\

The following theorem provides a local linear relation between unicellular LLT polynomials.

\begin{thm}\label{1convthm}
 For a partition $\lambda$ and $i$ such that $\lambda_i +2 \leq \lambda_{i-1}$. If $i=1$, then let $\lambda_0$ be infinity. Let $\mu^0=\lambda, \mu^1,\mu^2$ be partitions defined by $\mu^a_j= \lambda_j $ if $j\neq i$ and $\mu^a_i=\lambda_i +a$ for $a=0,1,2$. Then
$$G_{\mu^0}[X;q] - G_{\mu^1}[X;q] = q (G_{\mu^1}[X;q] - G_{\mu^2}[X;q] ).$$
when $\lambda_{n-\lambda_i-1}=\lambda_{n-\lambda_i }$. By taking the conjugation, we have
$$L(n,\mu^0) - L(n,\mu^1)= q (L(n,\mu^1)- L(n,\mu^2) ).$$
\end{thm}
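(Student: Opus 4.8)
The plan is to prove the identity at the level of the combinatorial sum (\ref{Dlambda}), working with $G_M[X;q]$ for matrices $M$ that are $0/1$ and satisfy $(*)$, and then to get the $L(n,\mu^a)$ version by applying the conjugation/$q\mapsto q^{-1}$ duality recorded just before Definition~\ref{1convthm} (note that $|\mu^0|$, $|\mu^1|$, $|\mu^2|$ differ by $1$ each, so the powers of $q$ produced by the duality are consistent with the stated relation). So it suffices to establish
$$G_{\mu^0}[X;q] - G_{\mu^1}[X;q] = q\bigl(G_{\mu^1}[X;q] - G_{\mu^2}[X;q]\bigr).$$
The three partitions differ only in row $i$, so on the matrix side the associated $0/1$ matrices $M^0, M^1, M^2$ differ only in which of two specific entries in a single row are $0$ versus $1$. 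More precisely, if $c = n - \lambda_i$, then passing from $M^a$ to $M^{a+1}$ changes one entry $m_{r,s}$ from $0$ to $1$, where $r$ is the row index corresponding to the $i$-th part and $s$ runs over two consecutive column positions. The condition $\lambda_i + 2 \le \lambda_{i-1}$ guarantees the two changed columns stay within the "free" region of row $i-1$ as well, so that $(*)$ is preserved throughout and $\mu^1$, $\mu^2$ are genuine partitions below the staircase.

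The key step is a sign-reversing / weight-preserving argument on permutations. Write the three sums over the common index set $S_n$; the summand for $w$ in $G_{\mu^a}$ is $q^{\inv(w,M^a)} F_{\id(w)}$, and since the $M^a$ agree off the two special entries, $\inv(w,M^{a+1}) - \inv(w, M^a) \in \{0,1\}$ depending on whether a single pair $(p,q)$ lies in $\Inv(w)$. Grouping permutations by their value of $\id(w)$ and by the relevant inversion indicators, the difference $G_{\mu^0} - G_{\mu^1}$ becomes $\sum_{w \in A} q^{\inv(w,M^0)} F_{\id(w)}$ where $A$ is the set of $w$ for which the first special pair is an inversion, and similarly $G_{\mu^1} - G_{\mu^2} = \sum_{w \in B} q^{\inv(w,M^1)} F_{\id(w)}$. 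I would construct an explicit bijection $\phi\colon A \to B$ that preserves $\id(w)$ and shifts $\inv(w,M^0)$ by exactly $1$ — concretely, $\phi$ should be a local modification of $w$ (a transposition of two values, or a cycle on three values, chosen using the hypothesis $\lambda_{n-\lambda_i-1} = \lambda_{n - \lambda_i}$, which says the two columns in question behave symmetrically in the matrix so the swap doesn't disturb other inversions counted by $M$). The equality $\lambda_{n-\lambda_i - 1} = \lambda_{n-\lambda_i}$ is precisely what makes the two relevant columns "interchangeable," so that swapping the two values sitting in those columns is an involution-like move that is invisible to $\id$ and costs exactly one unit of $M$-weight.

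The main obstacle is verifying that this local move genuinely preserves $\id(w) = D(w^{-1})$ while changing $\inv(w, M^0)$ by exactly $1$ and changing no other $M$-counted inversion: one must check that all pairs $(p,q)$ with $m_{pq} = 1$ other than the special one have the same inversion status for $w$ and $\phi(w)$, and that $w^{-1}$ and $\phi(w)^{-1}$ have the same descent set. This is where the hypotheses $\lambda_i + 2 \le \lambda_{i-1}$ (room in the row above) and $\lambda_{n-\lambda_i-1} = \lambda_{n-\lambda_i}$ (column symmetry) must both be used in an essential way; getting the bijection to respect $\id$ is the delicate point, since descent sets of inverses are sensitive to the relative order of values, and I expect to need the column-symmetry hypothesis to argue that the values being moved occupy positions that are "adjacent enough" in $w^{-1}$ that swapping them cannot create or destroy a descent. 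Once the bijection is in hand, summing over $A$ and matching term-by-term with $q$ times the sum over $B$ gives the identity, and conjugating finishes the proof.
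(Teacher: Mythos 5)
Your overall strategy is the same as the paper's: reduce the identity to a bijection between $A=\{w\in S_n : (x,y)\in\Inv(w)\}$ and $B=\{w\in S_n:(x+1,y)\in\Inv(w)\}$ (with $x=\lambda_i+1$ and $y=n+1-i$) that preserves $\id(w)$ and shifts the counted inversion number by exactly one, and then conjugate. The problem is that your writeup stops exactly where the proof begins: you say you \emph{would} construct such a bijection and you explicitly flag its $\id$-preservation as ``the main obstacle,'' but you never produce the map or the verification. Since that bijection \emph{is} the theorem, this is a genuine gap, not a detail. The missing map (Kadell's, and the one the paper uses) is completely explicit: $f_{xy}(w)=w$ if $(x,y)$ and $(x+1,y)$ have the same inversion status in $w$, and $f_{xy}(w)=ws_x$ otherwise.

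Moreover, the one concrete mechanism you propose for the delicate step points in the wrong direction. You suggest that the hypothesis $\lambda_{n-\lambda_i-1}=\lambda_{n-\lambda_i}$ should force the two moved values to be ``adjacent enough'' in $w^{-1}$ so that swapping them does not disturb the descent set. In fact adjacency is exactly what would \emph{break} $\id$-preservation, and it is ruled out by the definition of the map, not by that hypothesis: in the nontrivial case one has $w(x)>w(y)>w(x+1)$ (or the reverse), so $|w(x)-w(x+1)|\ge 2$, the values $x$ and $x+1$ sit in non-adjacent positions of the word of $w^{-1}$, and interchanging two consecutive values that are not adjacent in a word never changes its descent set. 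The two hypotheses are instead what make the \emph{weight} count work: $\lambda_i+2\le\lambda_{i-1}$ forces rows $x$ and $x+1$ of $M$ to agree in every column other than $y$, and $\lambda_{n-\lambda_i-1}=\lambda_{n-\lambda_i}$ forces columns $x$ and $x+1$ to agree above row $x$, so right-multiplication by $s_x$ changes $\inv(\cdot,M)$ only through the designated entry. Finally, a bookkeeping slip: $G_{\mu^0}-G_{\mu^1}$ equals $(q-1)\sum_{w\in A}q^{\inv(w,M^1)}F_{\id(w)}$, not $\sum_{w\in A}q^{\inv(w,M^0)}F_{\id(w)}$; the factor $(q-1)$ is common to both sides and cancels, so this does not affect the strategy, but it should be stated correctly.
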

\begin{rmk}

The last condition $\lambda_{n-\lambda_i-1}=\lambda_{n-\lambda_i }$ holds if $\lambda,\mu^1,\mu^2$ are contained in a rectangle $ m \times (n-m)$ for some $m$. Indeed, since $\lambda_i+2 \leq n-m$ we have $n-\lambda_i-1 \geq m+1$ so that $\lambda_{n-\lambda_i-1}=\lambda_{n-\lambda_i}=0$.
\end{rmk}
\begin{rmk}\label{g1g2}
Theorem \ref{1convthm} is equivalent to the statement that there are two symmetric functions $g_1,g_2$ satisfying
\begin{align*}
G_{\mu^0}[X;q]&=g_1+g_2\\
G_{\mu^1}[X;q]&=g_1+ q^{-1}g_2\\
G_{\mu^2}[X;q]&=g_1+q^{-2} g_2.
\end{align*}
In fact, we will show that if $\mu^a$ are contained in the rectangle $m \times (n-m)$ then the conjugate of $g_1$ and $g_2$ are both 2-Schur positive. 

\end{rmk}
{\it Proof of Theorem \ref{1convthm}.} 
We use a bijection from $S_n$ to $S_n$ described in \cite{Kad85}. \\

For $1 \leq x<y \leq n$, let $A_{x,y}$ be the subset $\{ w \in S_n \mid (x,y) \in \Inv (w)\}$ of $S_n$. Consider the following bijection:
$$f_{xy} : A_{x,y} \longrightarrow A_{x+1,y} $$
for $x+1<y$, defined by $f_{xy}(w)=w$ if both $(x,y),(x+1,y)$ are in $\Inv (w)$, and $f_{xy}(w)= ws_x$ if $(x+1,y)$ is not in $\Inv(w)$. In the second case, $(x,y)$ is not in $\Inv ( ws_x)$ and $(x+1,y)$ is in $\Inv ( ws_x)$ by the construction. A. Similarly, a slight modification of the map $f_{xy}$ and the proof of the bijectivity of $f_{xy}$ provide a bijection $f'_{xy} $ between $A^c_{x,y}$ and $A^c_{x+1,y}$, defined by $f'_{xy}(w)=w$ if $(x+1,y) \notin \Inv(w)$ and $f'_{xy}(w)= w s_x$ if $ (x+1,y) \in \Inv(w)$. We denote this bijection by the same notation $f_{xy}$ so that $f_{xy}$ is a bijection from $S_n$ to $S_n$.\\

To prove the theorem, set $x= \lambda_i +1=\mu^1_i$ and $y=n+1-i$.
It turns out that the above bijection preserves $\inv(w,D_{\mu^1})$. First of all, it is obvious when $f_{xy}$ is the identity map. If $f_{xy}(w)= ws_x$, for $M=M_{\mu^a}$ ($a=0,1,2$), we have $M_{x,r}=M_{x+1,r}$ for $r>x+1$ not equal to $n+1-i$ by the construction of $\mu^a$. Moreover, $M_{s,x}=M_{s+1,x}$ for all $s+1<x$ by the condition $\lambda_{n-\lambda_i-1}=\lambda_{n-\lambda_i }$ for $M=M_{\mu^a}$. If $\lambda$ is contained in a rectangle $m\times (n-m)$, these numbers are 0.\\

Now we are ready to prove Theorem \ref{1convthm}. Let $g_1,g_2$ be the symmetric functions
\begin{align*}
g_1&=\omega\left( \sum_{w \in A^c_{xy}} q^{\inv(w,M_\lambda)}F_{\id(w)}\right)\\
g_2&=\omega\left( \sum_{w \in A_{xy}} q^{\inv(w,M_\lambda)}F_{\id(w)}\right),
\end{align*}
where $\omega$ is the conjugation. It is clear from the definition that $G_\lambda=g_1+g_2$ and $G_{\mu^1} = g_1 + q^{-1} g_2$. Therefore, it is enough to show that $G_{\mu^2}=g_1 + q^{-2} g_2$. By the bijection $f_{xy}$, we have

\begin{align*}
g_1&=\omega\left( \sum_{w \in A^c_{x+1,y}} q^{\inv(w,M_{\mu^1})}F_{\id(w)}\right)\\
g_2&=\omega\left( \sum_{w \in A_{x+1, y}} q^{\inv(w,M_{\mu^1})-1}F_{\id(w)}\right),
\end{align*}
Since $w \in A_{x+1,y}$, we have $(x+1,y) \in \Inv(w)$, $\inv(w,M_{\mu^1})=\inv(w,M_{\mu^2})+1$ and
\begin{align*}
g_1&=\omega\left( \sum_{w \in A^c_{x+1,y}} q^{\inv(w,M_{\mu^2})}F_{\id(w)}\right)\\
g_2&=\omega\left( \sum_{w \in A_{x+1, y}} q^{\inv(w,M_{\mu^2})-2}F_{\id(w)}\right),
\end{align*}
Therefore, we have
\begin{align*} G_{\mu^2}[X;q]&= \omega\left( \sum_{w \in A^c_{x+1,y}} q^{\inv(w,M_{\mu^2})}F_{\id(w)}\right)+\omega\left( \sum_{w \in A_{x+1, y}} q^{\inv(w,M_{\mu^2})-2}F_{\id(w)}\right) \\
&= g_1+q^{-2}g_2,\\
\end{align*}
we are done.
\qed
\begin{example}
For a partition $\mu$ with $\mu_1\leq 2$, let $\kst_\mu$ denote the $2$-Schur function indexed by $\mu$.
Let $n=6$, $\lambda=(1,1)$, and $i=1$. Then we have $\mu^1=(2,1)$, $\mu^2=(3,1)$, and

\begin{align*}
L(n,\lambda)&= \kst_{1,1,1,1,1,1}+ (q^4+2q^3)\kst_{2,1,1,1,1}+(2q^6+q^5)\kst_{2,2,1,1}+ q^7 \kst_{2,2,2}\\
L(n,\mu^1)&= \kst_{1,1,1,1,1,1}+ 3q^3\kst_{2,1,1,1,1}+3q^5\kst_{2,2,1,1}+ q^6 \kst_{2,2,2}\\
L(n,\mu^2)&= \kst_{1,1,1,1,1,1}+ (2q^3+q^2)\kst_{2,1,1,1,1}+(q^5+2q^4)\kst_{2,2,1,1}+ q^5 \kst_{2,2,2}
\end{align*}
Therefore, one can take
\begin{align*}
g_1&=\kst_{1,1,1,1,1,1}+ 2q^3\kst_{2,1,1,1,1}+q^5\kst_{2,2,1,1}\\
g_2&=q^4\kst_{2,1,1,1,1}+ 2q^6\kst_{2,2,1,1}+ q^7 \kst_{2,2,2}
\end{align*} 
so that Remark \ref{g1g2} holds.
\end{example}





\section{linearlity theorem}
In this section, we show Theorem \ref{convthm}. 
\begin{thm}[Linearity theorem] \label{convthm}
Let $\lambda$ be the partition contained in $(n-m)^{m}$. For a subset $I$ of $\{1,2,\ldots,m\}$ and $i\in I$, define $e_i$ be $1$ if $i\in I$ and $0$ otherwise. Then there are $2$-Schur positive functions $f_{I,m}$ such that
$$L(n,\lambda)= \sum_{I \subset [m]} f_{I,m} q^{- e_I\cdot \lambda} $$
where $e_I\cdot \lambda = \sum_{j=1}^m e_i \lambda_i$. 
\end{thm}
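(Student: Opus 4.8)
The plan is to bootstrap from the local linear relation of Theorem~\ref{1convthm} by an induction that peels off one part of $\lambda$ at a time while tracking the exponents of $q$. Concretely, I would induct on the quantity $|\lambda|$ (or, equivalently, on the number of times one can apply Theorem~\ref{1convthm}), the base case being $\lambda$ a ``rectangular staircase-free'' shape where $\lambda_i \in \{0, n-m\}$ for every $i$; in that situation $\lambda$ itself arises from a tuple of single cells on two diagonals, and the claimed decomposition is trivial with a single nonzero $f_{I,m}$ coming from the domino/Hall--Littlewood computation promised in Section~5 (or one simply declares this the anchor and defers positivity to that section). For the inductive step, pick the largest index $i \in [m]$ for which $\lambda_i$ is not already forced to its extreme value and for which $\lambda_i + 2 \le \lambda_{i-1}$ holds; apply Theorem~\ref{1convthm} with this $i$ to get $g_1, g_2$ (in the $\omega$-image, the $L$-version) with
\begin{align*}
L(n,\mu^0) &= g_1 + g_2,\\
L(n,\mu^1) &= g_1 + q^{-1} g_2,\\
L(n,\mu^2) &= g_1 + q^{-2} g_2,
\end{align*}
and observe that $g_1$ is (up to conjugation) an $L$ of the same form but with $\lambda_i$ replaced by a value that is ``saturated'' in direction $i$, so the induction hypothesis applies to it, while $g_2$ is $q^{\lambda_i}$ times an honest difference $L(n,\mu') - L(n,\mu'')$ of smaller shapes to which the hypothesis also applies.

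The key bookkeeping is to show that the substitution $g_2 \mapsto q^{-1} g_2$ effected by incrementing $\lambda_i$ is exactly the substitution $q^{-e_I \cdot \lambda} \mapsto q^{-e_I \cdot \lambda'}$ on the subset-indexed terms, where $\lambda'$ agrees with $\lambda$ except in coordinate $i$. This forces the assignment: a subset $I$ contributes a factor $q^{-\lambda_i}$ precisely when $i \in I$, and the relation $L(n,\mu^a) = g_1 + q^{-a} g_2$ says that $g_2$ collects exactly those $f_{I,m}$-terms with $i \in I$ (which each pick up one more factor of $q^{-1}$ as $\lambda_i$ grows by one) and $g_1$ collects those with $i \notin I$ (unchanged). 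Running this over all coordinates $i = m, m-1, \dots, 1$ in turn, the telescoping of these $2^m$ pieces yields the stated formula $L(n,\lambda) = \sum_{I \subset [m]} f_{I,m}\, q^{-e_I \cdot \lambda}$ with the $f_{I,m}$ depending only on $m$ and $n$, not on $\lambda$ — this last independence being the crux of why it deserves to be called a linearity theorem, and it follows because at each step $g_1$ and $g_2$ are themselves of the required universal form by induction.

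The main obstacle I expect is verifying that the hypotheses of Theorem~\ref{1convthm} — in particular $\lambda_i + 2 \le \lambda_{i-1}$ and the ``$\lambda_{n-\lambda_i-1} = \lambda_{n-\lambda_i}$'' condition — remain available throughout the induction, and that the order in which one chooses the coordinates $i$ is consistent (i.e. that the $2^m$-fold decomposition is well-defined and order-independent). The remark following Theorem~\ref{1convthm} already guarantees the second condition whenever everything stays inside the $m \times (n-m)$ rectangle, which it does by construction, so the real work is the combinatorial argument that one can always find an admissible coordinate to apply the relation at, and that the resulting $g_1, g_2$ are again partitions inside the rectangle of the correct ``universal'' shape. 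Once that is in place, $2$-Schur positivity of each $f_{I,m}$ is not proved here but reduced to the domino case: taking $\lambda$ to make $e_I \cdot \lambda$ extremal isolates $f_{I,m}$ as (a $q$-shift of) a single $L(n,\lambda)$ for $\lambda$ with parts in $\{0, n-m\}$, i.e.\ a rectangle LLT polynomial, whose $2$-Schur positivity is supplied by the generalized Hall--Littlewood identification of \cite{GH07} to be invoked in Section~5. I would state that reduction explicitly at the end and defer its proof.
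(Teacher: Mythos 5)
Your overall strategy---bootstrap from the local relation of Theorem~\ref{1convthm} and defer $2$-Schur positivity to the domino/Hall--Littlewood results of Section~5---is the same as the paper's, but your choice of anchor is a genuine gap, not a deferrable detail. You take as base case the shapes with $\lambda_i\in\{0,n-m\}$ and assert that there the decomposition is ``trivial with a single nonzero $f_{I,m}$'' given by the rectangle LLT computation. Neither claim is right. For $\lambda=(n-m)^m$ one has $L(n,\lambda)=\kso_{1^m}\kso_{1^{n-m}}$, which carries \emph{all} $2^{[n/2]}$ terms $f_{I,m}$ (see the Corollary in Section~7, where this case is derived as a \emph{consequence} of Theorem~\ref{convthm}); and such a $\lambda$ is still encoded by a tuple of single cells, so Theorem~\ref{gh} does not apply to it---``LLT polynomials indexed by rectangles'' refers to the constituent shapes $\lambda^{(i)}$ being rectangles (dominos), not to the encoding partition $\lambda$ being a rectangle. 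The paper's actual anchor is the opposite extreme: the staircase band $\delta_{m-1}\subset\lambda\subset\delta_m$ with $m=[n/2]$, where consecutive cells of the tuple sit on adjacent diagonals and Lemma~\ref{4.8} splits each of the $m$ pairs into a horizontal or vertical domino, yielding $G_\lambda=\sum_{I}q^{e_I\cdot b_\lambda}\tilde G_{\bmeta_{a_I}}$ and hence the explicit identification $f_{I,m}=q^{e_I\delta_m}\omega(\tilde G_{\bmeta_{a_I}})$, whose positivity is then Theorem~\ref{2schur}. Without this step you have no identification of the $f_{I,m}$ at all; your fallback of ``isolating $f_{I,m}$ by making $e_I\cdot\lambda$ extremal'' also fails, since no $\lambda\subset(n-m)^m$ separates all $2^m$ subsets and, even if it did, you would only recover $f_{I,m}$ as a coefficient, not as a rectangle LLT polynomial.

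Once the anchor is placed correctly, the propagation step you worry about (admissibility of coordinates, order-independence of the $2^m$-fold splitting) is handled in the paper more cleanly than by your coordinate-by-coordinate telescoping: Lemma~\ref{deltam} inducts outward from the band to all $\lambda\subset\delta_m$ using Theorem~\ref{1convthm} together with Remark~\ref{1imply} (validity for two of $\mu^0,\mu^1,\mu^2$ implies it for the third), and then a minimal-counterexample argument shows any $\lambda$ violating the theorem would satisfy $\lambda_i\le\lambda_{i+1}+1$ for all $i$ (otherwise decrease $\lambda_i$ by $1$ and $2$ to get smaller shapes where the theorem holds), forcing $\lambda\subset\delta_m$ and a contradiction; the case $m<[n/2]$ is then Proposition~\ref{less}. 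I would rework your argument around the staircase-band base case and Lemma~\ref{4.8}; as written, the induction has nothing true to start from.
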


\begin{rmk}\label{1imply}
Note that Theorem \ref{convthm} implies Theorem \ref{1convthm} for any choice of $\lambda,\mu^1,\mu^2$ satisfying the hypothesis in Theorem \ref{1convthm}. Conversely, assuming the hypothesis in \ref{1convthm} if we know that Theorem \ref{convthm} is true for two of $\lambda,\mu^1,\mu^2$, then Theorem \ref{convthm} is true for all three partitions.
\end{rmk}

Without loss of generality, one can assume that $m\leq n/2$ since one can show that $L(n,\lambda)$ is the same as $L(n,\lambda')$ where $\lambda'$ is the conjugate of $\lambda$. The exact formula for $f_{I,m}$ when $m=[n/2]$ is the following.

\begin{thm}\label{half}
Assume that $m=[n/2]$. 
Let $l$ be the size of $I$. If $I=\{m-l+1,m-l+2, \ldots, m \}$, then $f_{I,m}= q^{ml} \kst_{2^l 1^{n-2l}}$. For a subset $I$, there is a partition $\mu$ such that
$$I= \{ m-l+1-\mu_1, m-l+2-\mu_2, \ldots, m- \mu_l \}$$
Then $f_{l,m}= q^{ml+|\mu|} \kst_{2^l 1^{n-2l}}$ if $n$ is even, and $f_{l,m}= q^{(m+1)l+|\mu|} \kst_{2^l 1^{n-2l}}$ if $n$ is odd.
\end{thm}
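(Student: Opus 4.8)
The plan is to prove Theorem~\ref{half} by combining two ingredients: the explicit description of $L(n,\lambda)$ when $\lambda$ is a \emph{rectangle} (equivalently, the Hall--Littlewood/generalized-Hall--Littlewood identity from \cite{GH07}), and the local linear relations of Theorem~\ref{1convthm}, which propagate the formula for $f_{I,m}$ from one partition $\lambda \subseteq (n-m)^m$ to all others. Since $m=[n/2]$, the ambient box is $(n-m)^m$ with $n-m \in \{m, m+1\}$, so every part $\lambda_i$ lies in $\{0,1,2\}$; in particular the index set $I$ above always refers to parts equal to $2$ (for $n$ even) with a shift by $\mu$ recording how far below $m$ the ``column of $2$'s'' sits, and the partition $\mu$ in the statement is exactly the complement-datum that records which rows of the rectangle carry a full part $n-m$.

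First I would fix $l$ and treat the ``staircase-top'' subset $I_0=\{m-l+1,\dots,m\}$, i.e.\ $\lambda=(\,(n-m)^l\,)$ padded by zeros (or, for $n$ odd, $\lambda = ((n-m)^l)$ again but now $n-m=m+1$). Here $\lambda$ is literally an $l\times(n-m)$ rectangle, so by the rectangle case $L(n,\lambda)$ is a single power of $q$ times a $2$-Schur function, and a direct degree count (the exponent $p=\binom{n}{2}-|\lambda|$ from the $q^p L(n,\lambda;q^{-1})=G_\lambda$ duality, cross-checked against the known spin grading of domino tableaux) pins down the power as $q^{ml}$ for $n$ even and $q^{(m+1)l}$ for $n$ odd, with $2$-Schur index $\kst_{2^l 1^{n-2l}}$. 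This isolates the ``top'' coefficient $f_{I_0,m}$ in the Theorem~\ref{convthm} expansion, because $e_{I_0}\cdot\lambda$ is maximal among all $I$ for this $\lambda$ and all other $f_{I,m}$ contribute at strictly higher order in $q$ after the substitution — so matching lowest-degree terms forces $f_{I_0,m}=q^{ml}\kst_{2^l 1^{n-2l}}$ (resp.\ $q^{(m+1)l}$).

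Next I would run an induction on $|\mu|$ to reach a general $I=\{m-l+1-\mu_1,\dots,m-\mu_l\}$. Each unit step $\mu \to \mu + \epsilon_j$ (increasing one part of $\mu$ by $1$, staying a partition) corresponds exactly to sliding one entry of $I$ down by one, which is the move governed by Theorem~\ref{1convthm}: taking the triple $\mu^0,\mu^1,\mu^2$ there with the distinguished row $i$ chosen so that the relation records precisely this shift, Remark~\ref{g1g2} gives $2$-Schur-positive $g_1,g_2$ with $L(n,\mu^a)=g_1+q^{-a}g_2$. Comparing with the $\sum_I f_{I,m}q^{-e_I\cdot\lambda}$ expansion and using Remark~\ref{1imply} (the linearity theorem is stable under these triples), the $g_2$-part is exactly $\sum_{i\in J} f_{J,m} q^{-e_J\cdot\lambda}$ over the $J$'s containing the moved index, and the recursion on the $q$-power reads off as $f_{J',m}=q^{|\mu'|-|\mu|}\cdot(\text{shift})\cdot f_{J,m}$, yielding the claimed $q^{ml+|\mu|}$ (resp.\ $q^{(m+1)l+|\mu|}$) by telescoping from the base case; the $2$-Schur index $\kst_{2^l1^{n-2l}}$ is unchanged because $l=|I|$ is an invariant of the move.

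The main obstacle, I expect, is the bookkeeping that makes the identification ``$g_2$-part $=$ sum of $f_{J,m}$ over $J\ni$(moved index)'' precise and consistent across all steps — in other words, checking that the local relation of Theorem~\ref{1convthm} acts on the global expansion of Theorem~\ref{convthm} exactly by the single substitution $q^{-e_I\cdot\lambda}\mapsto q^{-e_I\cdot\mu^a}$ with no cross-terms, and that the hypothesis $\lambda_{n-\lambda_i-1}=\lambda_{n-\lambda_i}$ of Theorem~\ref{1convthm} (automatic inside a rectangle, by the remark following it) is available at every stage of the induction. A secondary, more mechanical obstacle is nailing the parity split $ml$ vs.\ $(m+1)l$: this comes down to whether $n-m$ equals $m$ or $m+1$, and the cleanest way to settle it is to verify the two smallest genuinely new cases ($n=2m$ and $n=2m+1$ with $l=1$) by hand against the rectangle formula, then let the induction carry the exponent $|\mu|$ additively. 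Neither step involves more than careful degree accounting once the rectangle base case and the $f_{xy}$-bijection of Theorem~\ref{1convthm} are in hand.
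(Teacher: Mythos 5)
There is a genuine gap, and it sits at the very start of your argument. Your base case asserts that for $\lambda=(n-m)^l$ ``the rectangle case'' makes $L(n,\lambda)$ a single power of $q$ times a $2$-Schur function. This is false: for example $L(n,(n-m)^m)=\kso_{1^m}\kso_{1^{n-m}}$ by Remark \ref{1}, which expands into $2^m$ distinct $2$-Schur terms. The Grojnowski--Haiman rectangle theorem (Theorem \ref{gh}) applies to LLT polynomials indexed by a \emph{tuple of rectangular shapes} satisfying the corner conditions --- in this paper, the domino tuples $\bmeta_a$ --- not to unicellular LLT polynomials whose associated inversion-counting partition $\lambda$ happens to be a rectangle; for a tuple of single cells on two diagonals the corner hypotheses of Theorem \ref{gh} fail. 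The missing idea is the reduction in Theorem \ref{fim}: for $\delta_{m-1}\subset\lambda\subset\delta_m$, Lemma \ref{4.8} splits $G_\lambda$ into a sum of $2^m$ domino LLT polynomials $\tilde G_{\bmeta_{a_I}}$ with explicit $q$-shifts $q^{e_I\cdot b_\lambda}$, which identifies $f_{I,m}=q^{e_I\delta_m}\omega(\tilde G_{\bmeta_{a_I}})$; then Theorems \ref{gh}, \ref{g2schur} and Lemma \ref{commute} show each $\omega(\tilde G_{\bmeta_{a_I}})$ equals $q^M\kst_{2^l1^{n-2l}}$ with $M=\sum_{i=1}^{l}(m-i)$, and the arithmetic $e_I\cdot\delta_m+M=ml+|\mu|$ gives the stated exponent. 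Your proposal never produces the single-$2$-Schur-term input anywhere, and without it the ``match lowest-degree terms'' step has nothing to match against; it is also circular, since identifying the lowest-degree part of $L(n,(n-m)^l)$ presupposes the expansion you are trying to prove.

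The second half of your argument also misreads the role of $\mu$. In Theorem \ref{half} the partition $\mu$ parametrizes the \emph{subset} $I$, not the partition $\lambda$: all $2^m$ coefficients $f_{I,m}$ occur simultaneously in the expansion of every single $L(n,\lambda)$, and Theorem \ref{convthm} asserts they are independent of $\lambda$. Theorem \ref{1convthm} relates $L(n,\lambda)$ for partitions $\lambda$ differing in one part; it does not provide a recursion carrying $f_{I,m}$ to $f_{I',m}$ when one element of $I$ is slid down, so the telescoping step ``$f_{J',m}=q^{|\mu'|-|\mu|}\cdot(\text{shift})\cdot f_{J,m}$'' has no justification. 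In the paper, Theorem \ref{1convthm} is used only to propagate the validity of the \emph{linearity statement} from the staircase band to general $\lambda$ (Lemma \ref{deltam} and the proof of Theorem \ref{convthm}); the values of the $f_{I,m}$ themselves are read off all at once from the domino decomposition, with no induction on $|\mu|$ needed. Finally, you nowhere establish that the $2$-Schur index is $2^l1^{n-2l}$ or that it depends only on $l=|I|$; the latter requires the swap bijection of Lemma \ref{commute} and the former requires the $k$-rectangle property (Theorem \ref{krec} via Theorem \ref{g2schur}), neither of which has a counterpart in your outline.
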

Theorem \ref{half} follows from Theorem \ref{2schur} and Theorem \ref{fim}. Note that for fixed $l$, the maximum of $ml+|\mu|$ for even $n$ and the maximum of $(m+1)l+ |\mu|$ for odd $n$ are the same as $l(n-l)$.

\begin{rmk}
Note that the number of weak standard tableau of length $n$ for $k=2$ is $2^{[n/2]}$ \cite{LLMS10} which is the same as the number of possible $I$ for $m=[n/2]$. 
\end{rmk}

We will first show Theorems \ref{convthm}, \ref{half} for $m=[n/2]$ when $n$ is even. Theorem \ref{convthm} for $m<[n/2]$ can be proved similarly. Formulas for $f_{I,m}$ when $m\leq[n/2]$ is the following.

\begin{proposition}\label{less} For $m\leq [n/2]$, we have
$$f_{J,m}= \sum f_{I,[n/2]}$$
where the sum runs over all subset $I \subset \{1,2,\ldots,[n/2]\}$ such that $J=I \cap \{1,2,\ldots,m\}$.
\end{proposition}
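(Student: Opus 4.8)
The plan is to reduce $f_{J,m}$ for $m \le [n/2]$ to the known formula for $f_{I,[n/2]}$ by iterating the local linear relation in Theorem~\ref{1convthm}, exactly as Remark~\ref{1imply} suggests. Fix $m < [n/2]$ and a partition $\lambda$ contained in $(n-m)^m$, viewed inside the larger box $(n-m')^{m'}$ with $m' = [n/2]$ by padding with zero rows $\lambda_{m+1} = \cdots = \lambda_{m'} = 0$. The key observation is that Theorem~\ref{convthm} for the box $(n-m')^{m'}$ gives
$$L(n,\lambda) = \sum_{I \subset [m']} f_{I,m'}\, q^{-e_I \cdot \lambda},$$
and since $\lambda_i = 0$ for $i > m$, the exponent $e_I \cdot \lambda = \sum_{i=1}^{m'} e_i \lambda_i$ depends only on $I \cap [m]$. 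Grouping the terms on the right by the value $J := I \cap [m]$ yields
$$L(n,\lambda) = \sum_{J \subset [m]} \Big( \sum_{\substack{I \subset [m'] \\ I \cap [m] = J}} f_{I,m'} \Big) q^{-e_J \cdot \lambda},$$
so that, comparing with the expansion $L(n,\lambda) = \sum_{J \subset [m]} f_{J,m}\, q^{-e_J\cdot\lambda}$ guaranteed by Theorem~\ref{convthm} for the box $(n-m)^m$, one reads off $f_{J,m} = \sum_{I \cap [m] = J} f_{I,m'}$, which is the claim.

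The one point requiring care is the uniqueness of the coefficients $f_{J,m}$ in the expansion $L(n,\lambda) = \sum_J f_{J,m}\, q^{-e_J \cdot \lambda}$: the identity above is an equality of Laurent polynomials in $q$ (with $2$-Schur-positive coefficients) that must hold for \emph{every} $\lambda \subset (n-m)^m$, and I need to know that the family $\{f_{J,m}\}_{J}$ is determined by all these equations. This follows from the construction in the proof of Theorem~\ref{convthm}: the $f_{J,m}$ are not just abstract solutions but are produced explicitly by the iterated splitting into $g_1, g_2$ in Theorem~\ref{1convthm}, so what I really want is that the same iterated splitting, carried out inside the big box $(n-m')^{m'}$ and then specialised to $\lambda$ supported on the first $m$ rows, collapses to the splitting carried out inside $(n-m)^m$. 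Concretely, each step of Theorem~\ref{1convthm} increments some $\lambda_i$ with $i$ an element of the relevant index set; the bijection $f_{xy}$ used there only ever involves columns and rows determined by $x = \lambda_i + 1$ and $y = n+1-i$, and when $i > m$ (i.e.\ $\lambda_i = 0$ stays $0$) the corresponding factor $q^{-e_I \cdot \lambda}$ contributes nothing, so those splittings are invisible after specialisation. Matching up the two iterative procedures step by step gives the grouping identity and hence the proposition; because each $f_{I,m'}$ is $2$-Schur positive, so is each $f_{J,m} = \sum_{I \cap [m] = J} f_{I,m'}$.

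The main obstacle I anticipate is bookkeeping rather than conceptual: one has to set up the iteration of Theorem~\ref{1convthm} carefully enough to see that the coefficient of $q^{-e_J \cdot \lambda}$ really is well defined (different chains of moves from a base partition to $\lambda$ should produce the same $f_{J,m}$), and to track which index set $I$ or $J$ a given intermediate $g_2$-piece belongs to. I would handle this by fixing, once and for all, a canonical order in which to apply the moves — say, increasing the rows from the bottom up — so that the correspondence $I \leftrightarrow (\text{chain of splittings})$ becomes a genuine bijection, at which point the grouping $J = I \cap [m]$ is immediate. No further input beyond Theorems~\ref{1convthm} and~\ref{convthm} and the remark following them is needed.
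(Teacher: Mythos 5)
Your central step---viewing $\lambda\subset(n-m)^m$ inside the larger box $(n-m')^{m'}$ with $m'=[n/2]$ ``by padding with zero rows''---fails for many of the partitions the proposition must cover. Padding fixes the number of rows but not the width: a partition in $(n-m)^m$ can have $\lambda_1$ as large as $n-m$, which exceeds $n-m'=\lceil n/2\rceil$ whenever $m<[n/2]$ (for instance $n=6$, $m=2$, $\lambda=(4,4)\subset 4^2$ but $(4,4)\not\subset 3^3$). For such $\lambda$ the expansion $L(n,\lambda)=\sum_{I\subset[m']}f_{I,m'}q^{-e_I\cdot\lambda}$ from Theorem~\ref{convthm} simply does not apply, so there is nothing to group and compare. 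Your grouping computation is correct---it is exactly the paper's remark that the formula is ``consistent with Theorem~\ref{convthm} when fixing $\lambda$ and varying $m$''---but it only establishes the identity for $\lambda$ lying in the intersection of the two rectangles, i.e.\ $\lambda\subset(\lceil n/2\rceil)^m$, and in particular for $\lambda\subset\delta_m$.

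To reach the remaining partitions the paper runs the same induction as in the proof of Theorem~\ref{convthm}: take $\lambda\subset(n-m)^m$ of minimal size for which the expansion with $f_{J,m}:=\sum_{I\cap[m]=J}f_{I,[n/2]}$ fails; if $\lambda_i\geq\lambda_{i+1}+2$ for some $i$, the two smaller partitions obtained by decreasing $\lambda_i$ by $1$ and $2$ satisfy the expansion by minimality, and Theorem~\ref{1convthm} with Remark~\ref{1imply} then forces $\lambda$ to satisfy it as well; otherwise $\lambda\subset\delta_m$, where the grouping argument (equivalently Lemma~\ref{deltam}) already applies, a contradiction. Your paragraphs on ``uniqueness of the coefficients'' and ``matching the iterated splittings'' address a different concern and do not supply this missing reduction. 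Note also a logical point: in the paper Theorem~\ref{convthm} for $m<[n/2]$ is \emph{proved} with the $f_{J,m}$ defined by Proposition~\ref{less}, so you cannot start from an independently ``guaranteed'' expansion over the box $(n-m)^m$ and merely read the coefficients off; the content of the proposition is precisely that the expansion holds with these coefficients on all of $(n-m)^m$.
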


Note that the formula in Proposition \ref{less} is consistent with Theorem \ref{convthm} when fixing $\lambda$ and varying the integer $m$.\\

For proving Theorem \ref{convthm} for $n=[m/2]$, we need a relation between LLT polynomials for dominos, $2$-Schur functions, and the function $f_{I,m}$. It turms out that $f_{I,m}$ is a power of $q$ times single $2$-Schur function by following two theorems.

\begin{thm}\label{2schur}
Let $\eta_0$ be the horizontal domino $(2)$ and $\eta_1$ be the vertical domino $(1,1)$. 
For a $0-1$ sequence $a=(a_1,\ldots,a_m)$ and $n=2m$, define the $m$-tuple of tableaux $\bmeta_a=(\eta_{a_1},\ldots,\eta_{a_m})$ such that all tableaux lie in two adjacent diagonals with contents $0$ and $1$. Then we have
$$\omega(\tilde{G}_{\bmeta_a})=q^{M} \kst_{2^l 1^{n-2l}}$$
where $l$ is the number of $1$ in $a$ and $M=\sum_{i=1}^{\ell} (m-i)$. Similarly, for $n=2m+1$ define $\bmeta_a$ by $(\eta_{a_1},\ldots, \eta_{a_m},b)$ where $b$ is a box with a content $0$. Then we have
$$\omega(\tilde{G}_{\bmeta_a})=q^{M'} \kst_{2^l 1^{n-2l}}$$
where $l$ is the number of $1$ in $a$ and $M'=\sum_{i=1}^{\ell} (m-i+1)$.
\end{thm}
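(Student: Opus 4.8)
The plan is to reduce Theorem~\ref{2schur} to the known identification of LLT polynomials indexed by rectangles with generalized Hall--Littlewood polynomials, proved by Grojnowski and Haiman \cite{GH07}. The tuple $\bmeta_a$ consists of $m$ dominos, each lying on the two diagonals with content $0$ and $1$; a vertical domino $\eta_1$ occupies both diagonals, while a horizontal domino $\eta_0$ occupies only the diagonal of content $0$ (after the shifted-content rescaling). So up to the power-of-$u$ twist relating the new-variant combinatorial LLT polynomial to the ribbon-tableau combinatorial LLT polynomial (\cite[Prop.~6.17]{GH07}), $\tilde{G}_{\bmeta_a}$ is, up to a monomial in $q$, the combinatorial LLT polynomial $G'_{\mu^{(1)}/\mu^{(2)}}$ for the skew shape whose $2$-quotient records the pattern $a$ of horizontal versus vertical dominos. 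The first step is therefore to write down this skew shape explicitly and observe that when $a$ has $l$ ones the associated ribbon-generating function is (a $q$-shift of) the Hall--Littlewood-type symmetric function whose conjugate is a single $2$-Schur function $\kst_{2^l 1^{n-2l}}$.

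Concretely, I would proceed as follows. First, recall Remark~\ref{1imply} style bookkeeping: a unicellular or domino tuple on two adjacent diagonals corresponds, via Proposition~\ref{D} and the matrix $M$ formalism, to counting inversions of permutations weighted by a $0/1$ matrix satisfying $(*)$. For dominos the vertical domino contributes an extra forced inversion relation between the two cells it occupies; unwinding this, $\tilde{G}_{\bmeta_a}$ is a specialization $G_M[X;q]$ for an explicit matrix $M=M_a$. Second, I would show that permuting the dominos (changing the order of the $a_i$ while keeping $l$ fixed) changes $G_M[X;q]$ only by a power of $q$: this is exactly the kind of statement controlled by the bijection $f_{xy}$ from \cite{Kad85} used in the proof of Theorem~\ref{1convthm}, or alternatively follows because all these tuples have the same underlying multiset of diagonals and hence the same ribbon-tableau skew shape up to translation. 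This reduces the claim to the single ``sorted'' case, say $a=(1^l,0^{m-l})$ or $a=(0^{m-l},1^l)$. Third, in the sorted case the $l$ vertical dominos stacked together with $m-l$ horizontal dominos form (the $2$-quotient of) a genuine rectangle-type shape, so $\tilde{G}_{\bmeta_a}$ is, up to the $u\leftrightarrow q$ twist, a generalized Hall--Littlewood polynomial indexed by a rectangle; by \cite{GH07} its $\omega$-conjugate is a single $k$-Schur function, which for $k=2$ and the relevant shape is $\kst_{2^l 1^{n-2l}}$. Fourth, I would pin down the exponent: the power $M=\sum_{i=1}^{l}(m-i)$ (resp.\ $M'=\sum_{i=1}^{l}(m-i+1)$ for $n=2m+1$) is just the accumulated power-of-$q$ shift from $p=\tfrac{n(n-1)}{2}-|\lambda|$ in the $L(n,\lambda)=q^p\,\tilde G$ normalization together with the shift picked up when sorting the dominos; a direct count of the forced inversions among the vertical dominos in the sorted configuration gives exactly $\sum_{i=1}^l (m-i)$, and the extra $+l$ for odd $n$ comes from the trailing single box of content $0$ sitting to the right of all the dominos.

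The main obstacle I anticipate is Step 3: cleanly matching the sorted domino tuple to the precise rectangle whose generalized Hall--Littlewood polynomial \cite{GH07} identifies with $\kst_{2^l 1^{n-2l}}$, and making sure the $u^2=q$ convention and the $\omega$ (conjugation) are applied with the correct normalization so that no spurious power of $q$ or transpose of the $2$-Schur index sneaks in. In particular one must verify that the $2$-quotient of the chosen skew shape really is $(\text{single row},\text{single row})$ or the analogous pair forcing the ribbon generating function to be the Hall--Littlewood polynomial $H_{2^l1^{n-2l}}$ rather than some other rectangle; this is a finite but delicate diagram-chase. Once the sorted case and the exponent are nailed down, the general case is immediate from the $q$-shift invariance under reordering dominos, and Theorem~\ref{half} then follows by combining this with the linearity Theorem~\ref{convthm} and the bijective bookkeeping of the subsets $I$.
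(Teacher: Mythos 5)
Your overall strategy is the paper's: reduce to a sorted sequence $a$, identify that case with a generalized Hall--Littlewood polynomial via Grojnowski--Haiman, and pin down the power of $q$ as a minimal inversion count. But two of your steps have genuine gaps. First, the reordering step: you only claim that permuting the dominos changes the polynomial ``by a power of $q$,'' yet the right-hand side $q^{M}\kst_{2^l1^{n-2l}}$ depends only on $l$ and not on the positions of the $1$'s in $a$, so you need \emph{exact} invariance (or to prove the accumulated power is zero), and neither of your proposed justifications delivers it. The $f_{xy}$ bijection of \cite{Kad85} is built for the single-cell/matrix formalism and does not account for the semistandardness constraint inside a domino (indeed, a domino tuple is not literally a $G_M[X;q]$ for a $0$--$1$ matrix $M$, since a vertical domino forbids certain fillings rather than merely weighting them); and reordering the components of the tuple genuinely changes the associated skew shape, since the tuple is the $d$-quotient and permuting quotient components is not a translation. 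The paper's Lemma \ref{commute} proves the exact equality $\tilde{G}_{\bmlambda}=\tilde{G}_{\bmmu}$ under an adjacent horizontal/vertical swap by an explicit inversion-preserving six-case bijection on standard tuples; this is the real combinatorial content of the reduction and is absent from your proposal.

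Second, in the sorted case \cite{GH07} only gets you from $\tilde{G}_{\bmeta_a}$ to the generalized Hall--Littlewood polynomial $K_{\bmeta_a}=\varmathbb{B}_{(2)}^{m-\ell}\varmathbb{B}_{(1,1)}^{\ell}(1)$ --- note this is a \emph{tuple} of rectangles (each domino is a rectangle), not a single rectangle, and no $2$-quotient diagram chase is needed for that step. What \cite{GH07} does not give is the identification of this operator product with $\kst_{2^{\ell}1^{n-2\ell}}$: that requires the Lapointe--Morse $k$-rectangle property and the $k$-split recursion defining $2$-Schur functions (Theorems \ref{krec} and \ref{g2schur} in the paper), which your proposal never invokes, attributing the whole identification to \cite{GH07}. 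Your step 4 (the exponent as the minimum number of inversions, forced by pairs involving a vertical domino and realized by an explicit standard tuple) matches the paper's argument and is fine, except that the ``shift picked up when sorting'' you budget for does not exist once the exact swap invariance is established.
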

The proof of Theorem \ref{2schur} will be given in the next section. We first prove Theorem \ref{convthm} for the case $\delta_{m-1} \subset \lambda \subset \delta_m$ by writing $f_{I,m}$ in terms of $\omega(\tilde{G}_{\bmeta_a})$.

\begin{thm}\label{fim}
Let $\delta_{m}$ be the staircase tableau $(m,m-1, \ldots, 1)$ and $m=[n/2]$. Then Theorem $\ref{convthm}$ is true for $\delta_{m-1} \subset \lambda \subset \delta_m$ with
$$f_{I,m}= q^{e_I \delta_{m}} \omega(\tilde{G}_{\bmeta_{a_I}})$$
where $a_i$ is 1 if $i \in I$ and $0$ otherwise.
\end{thm}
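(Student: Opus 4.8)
The plan is to prove Theorem \ref{fim} by a direct combinatorial comparison between the fundamental-quasisymmetric expansion of $L(n,\lambda)$ and the expansions of the LLT polynomials $\tilde G_{\bmeta_a}$ for dominos, exploiting that both are governed by permutation statistics via the matrix formalism of Section 3. First I would fix $\lambda$ with $\delta_{m-1}\subset\lambda\subset\delta_m$, so that the only freedom is the set $I=\{i:\lambda_i=\delta_{m,i}=m-i+1\}$ recording which rows of $\lambda$ are ``long''; for $i\notin I$ we have $\lambda_i=m-i$. By Proposition \ref{D} (and the matrix description $G_M$), $L(n,\lambda)=\omega\big(\sum_{w\in S_n}q^{\inv(w,D_\lambda)}F_{\id(w)}\big)$, and the point is that $D_\lambda$ differs from $D_{\delta_m}$ in exactly the columns indexed by $i\notin I$.

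The key step is to set up a bijection $S_n\to S_n$ that converts $\inv(w,D_\lambda)$ into $\inv(v,D_{\delta_{m-1}}) + (\text{correction})$, where $D_{\delta_{m-1}}$ corresponds to the domino tuple $\bmeta_{a_I}$. Concretely, for each $i\notin I$ the cell in row $i$, column $\lambda_i+1=m-i+1$ is present in $\delta_m$ but absent in $\lambda$; applying the Kadell-style bijection $f_{xy}$ from the proof of Theorem \ref{1convthm} (with $x=\lambda_i+1$, $y=n+1-i$) repeatedly --- once for each such $i$, working in a fixed order (say decreasing $i$, i.e. from the bottom row up, so the condition $(*)$ and the equality of the relevant matrix entries are maintained at each stage) --- I would show that the sum $\sum_w q^{\inv(w,D_\lambda)}F_{\id(w)}$ equals $\sum_w q^{\inv(w,D_{\delta_{m-1}})}F_{\id(w)}$ up to a global shift of the $q$-exponent. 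Tracking the shift: each application of $f_{xy}$ on the ``$A$'' part multiplies by an extra factor of $q$ coming from the inversion $(x,y)$ that lies in $\delta_m$ but not in $\lambda$; summing these contributions over $i\notin I$ gives precisely $e_I\cdot\delta_m = \sum_{i\in I}\lambda_i$ as the exponent of $q$ relating the two generating functions. This is exactly the statement $f_{I,m}=q^{e_I\delta_m}\,\omega(\tilde G_{\bmeta_{a_I}})$, because $\omega(\tilde G_{\bmeta_{a_I}})$ is by definition the conjugate of the domino generating function associated with $D_{\delta_{m-1}}$ and the sequence $a_I$, where $a_i=1$ marks rows $i\in I$ (long rows become vertical dominos) and $a_i=0$ marks rows $i\notin I$ (short rows become horizontal dominos).

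To complete the proof of Theorem \ref{convthm} for $\delta_{m-1}\subset\lambda\subset\delta_m$, I would then invoke Theorem \ref{2schur}, which asserts $\omega(\tilde G_{\bmeta_a})=q^M\kst_{2^l1^{n-2l}}$ with $l=|I|$ and $M=\sum_{i=1}^l(m-i)$; combining with the previous paragraph gives $f_{I,m}=q^{e_I\delta_m+M}\kst_{2^l1^{n-2l}}$, which is manifestly $2$-Schur positive and matches the claimed normalization. The decomposition $L(n,\lambda)=\sum_{I}f_{I,m}q^{-e_I\cdot\lambda}$ then follows by partitioning $S_n$ according to which of the cells $(i,m-i+1)$, $i\notin I'$ (the potential ``extra'' cells) actually contribute inversions; more cleanly, one proves it by downward induction on $|\lambda|$ inside $\delta_m$ using Remark \ref{g1g2} / Theorem \ref{1convthm}, the base case being $\lambda=\delta_m$ (where $I=[m]$ and $L(n,\delta_m)$ is the single domino tuple with all vertical dominos, handled by Theorem \ref{2schur}).

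The main obstacle I anticipate is bookkeeping the order of the bijections and verifying that the hypotheses of $f_{xy}$ (condition $(*)$ on the matrix, and the vanishing/equality of the boundary entries $M_{s,x}$ and $M_{x,r}$ that made $\inv$-preservation work in Theorem \ref{1convthm}) remain valid at each intermediate step, since after removing one cell the shape is no longer a partition staircase and the matrix entries shift. The cleanest fix is probably to never remove cells one at a time but instead to argue directly that, for $\lambda$ between $\delta_{m-1}$ and $\delta_m$, the composite of all the relevant $f_{xy}$'s (which commute because they act on disjoint rows) carries $D_\lambda$-statistics to $D_{\delta_{m-1}}$-statistics with the uniform shift $e_I\cdot\delta_m$; commutativity and disjointness of supports should make the hypothesis-checking a single uniform verification rather than an inductive one.
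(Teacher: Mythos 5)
Your proposal has a fundamental structural problem: it tries to realize the domino LLT polynomials $\tilde G_{\bmeta_{a_I}}$ as sums over all of $S_n$ of the form $\sum_{w}q^{\inv(w,D)}F_{\id(w)}$ for a suitable diagram $D$ (you write that ``$D_{\delta_{m-1}}$ corresponds to the domino tuple $\bmeta_{a_I}$''), so that the Kadell bijection $f_{xy}$ could carry the $D_\lambda$-statistic to a domino statistic up to a power of $q$. No such identification exists: $\sty(\bmeta_{a_I})$ is a proper subset of $S_n$, since a vertical domino constrains the relative order of its two entries, and $G_{\delta_{m-1}}$ is itself the sum $\sum_{I}q^{|I|}\tilde G_{\bmeta_{a_I}}$ over all $2^m$ domino tuples rather than a single one. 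For the same reason your claimed ``global shift of the $q$-exponent'' between the $D_\lambda$- and $D_{\delta_{m-1}}$-generating functions is false (already for $n=2$, $m=1$ the two ends of the layer give $h_2+e_2$ and $h_2+qe_2$, which differ by no global power of $q$), and your proposed base case is also false: $L(n,\delta_m)$ is not a power of $q$ times the all-vertical domino LLT polynomial but the sum of all $2^m$ domino LLT polynomials with coefficient $1$. Finally, the fallback you offer --- downward induction on $|\lambda|$ inside $\delta_m$ via Theorem \ref{1convthm} --- is circular here: that induction is exactly Lemma \ref{deltam}, whose base case is the entire layer $\delta_{m-1}\subset\lambda\subset\delta_m$, which is precisely what Theorem \ref{fim} is supposed to supply. (You also conflate the summation index $I$ with the particular set of long rows of $\lambda$; the formula for $f_{I,m}$ must hold for every subset $I$ simultaneously.)

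The paper's proof is far more elementary and is essentially the idea you mention only in passing, namely splitting the defining sum according to which adjacent pairs of cells form inversions. Writing $\lambda=\delta_m-b_\lambda$ with $b_\lambda$ a $0$--$1$ sequence, the corresponding cell configuration consists of $m$ consecutive pairs of single boxes with contents $\{\epsilon,1-\epsilon\}$; Lemma \ref{4.8} (a direct consequence of the definition of $\inv_d$) merges one such pair into a domino, giving $\tilde G_{\bmlambda}=\tilde G_{\bmmu_0}+q^{\epsilon}\tilde G_{\bmmu_1}$. Iterating over all $m$ pairs yields Equation (\ref{linear}), $G_\lambda=\sum_{I}q^{e_I\cdot b_\lambda}\tilde G_{\bmeta_{a_I}}$, with no bijection on $S_n$ and no induction; Theorem \ref{2schur} is then only needed later to identify each $\omega(\tilde G_{\bmeta_{a_I}})$ with a $2$-Schur function. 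To salvage your write-up you should discard the Kadell bijection entirely and make the pair-merging argument precise; as written, the central steps do not hold.
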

\begin{proof} Let $\lambda$ be $\delta_{m} - b_\lambda$ where $b_\lambda$ is a $0-1$ sequence of length $m$. Then the theorem is equivalent to the formula
\begin{align}\label{linear}G_\lambda[X;q]= \sum_{I \subset \{1,2,\ldots,m\}}q^{e_I\cdot b_\lambda } \tilde{G}_{\bmeta_{a_I}}[X;q].\end{align}
which is clear by the following lemma:

\begin{lem}\label{4.8}
Let $\bmlambda$ be $d$-tuple of skew partitions such that $\lambda^{(i)}$ is a single box with a content $\epsilon$ for $\epsilon=0$ or $1$ and $\lambda^{(i+1)}$ is a single box with a content $1-\epsilon$. Then
$$\tilde{G}_\bmlambda= \tilde{G}_{\bmmu_0}+q^\epsilon \tilde{G}_{\bmmu_1}$$
where $\bmmu_\epsilon$ is a $(d-1)$-tuple of skew partitions obtained by replacing two partitions $\lambda^{(i)}$ and $\lambda^{(i+1)}$ by a domino $\bmeta_\epsilon$.
\end{lem}
The lemma is obvious by the definition of LLT polynomials and the theorem follows.
\end{proof}

Now we are ready to prove Theorem \ref{convthm}.
\begin{lem}\label{deltam}
For $m=[n/2]$, assume that Theorem \ref{convthm} holds for $\lambda$ satisfying $\delta_{m-1} \subset \lambda \subset \delta_m$. Then Theorem \ref{convthm} is true for all $\lambda \subset \delta_m $. In particular, Theorem \ref{convthm} holds for all $\lambda \subset \delta_p$ for any $p\leq m$.
\end{lem}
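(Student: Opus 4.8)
The plan is to use downward induction on $|\lambda|$, or equivalently on how far $\lambda$ sits from the full staircase $\delta_m$. The base case is exactly the hypothesis: Theorem \ref{convthm} holds for every $\lambda$ with $\delta_{m-1}\subset\lambda\subset\delta_m$, i.e.\ for every $\lambda$ obtained from $\delta_m$ by deleting at most one box from each row. For the inductive step, suppose $\lambda\subset\delta_m$ does not contain $\delta_{m-1}$, so there is some row index $i$ with $\lambda_i\le (\delta_m)_i-2=m-i-1$; pick, say, the largest such $i$. Choose the triple $\mu^0,\mu^1,\mu^2$ as in Theorem \ref{1convthm} with $\mu^2_i=\lambda_i+2\le\lambda_{i-1}$ and $\mu^0=\lambda$. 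All three partitions are still contained in $\delta_m$, hence in the rectangle $(n-m)^m$ (since $m=[n/2]$ forces $n-m\ge m$), so the side condition $\lambda_{n-\lambda_i-1}=\lambda_{n-\lambda_i}$ of Theorem \ref{1convthm} is automatic by the first Remark following that theorem. Both $\mu^1$ and $\mu^2$ are strictly larger than $\lambda$ in size, so by the induction hypothesis Theorem \ref{convthm} holds for $\mu^1$ and $\mu^2$.

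Now invoke Remark \ref{1imply}: Theorem \ref{1convthm} gives the linear relation $L(n,\mu^0)-L(n,\mu^1)=q\,(L(n,\mu^1)-L(n,\mu^2))$, and knowing Theorem \ref{convthm} for two of the three partitions in such a relation forces it for the third. Concretely, by Remark \ref{g1g2} there are symmetric functions $g_1,g_2$ with $\omega g_1,\omega g_2$ both $2$-Schur positive such that $G_{\mu^a}=g_1+q^{-a}g_2$ for $a=0,1,2$; matching this against the expansions $L(n,\mu^1)=\sum_I f_{I,m}q^{-e_I\cdot\mu^1}$ and $L(n,\mu^2)=\sum_I f_{I,m}q^{-e_I\cdot\mu^2}$ and using that $\mu^1,\mu^2$ differ from $\lambda$ only in coordinate $i$, one reads off $f_{I,m}$ for $\lambda$: the subsets $I$ not containing $i$ contribute the part of $\omega g_1,\omega g_2$ whose $q$-power is unchanged, and the subsets $I$ containing $i$ pick up the extra $q^{-\lambda_i}$ versus $q^{-(\lambda_i+1)}$, $q^{-(\lambda_i+2)}$. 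Thus the $f_{I,m}$ for $\lambda$ are (after the appropriate shift of $q$-powers) exactly the $2$-Schur positive pieces coming from $\omega g_1,\omega g_2$, which establishes Theorem \ref{convthm} for $\lambda$ and completes the induction. Finally, the last sentence of the lemma is immediate: $\delta_p\subset\delta_m$ for $p\le m$, so "all $\lambda\subset\delta_m$" already contains "all $\lambda\subset\delta_p$".

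The main obstacle is the bookkeeping in the inductive step: one must check that the decomposition $L(n,\lambda)=\sum_I f_{I,m}q^{-e_I\cdot\lambda}$ produced from the relation is consistent — that the $2$-Schur positive functions $f_{I,m}$ assigned to $\lambda$ agree, index by index $I$, with those forced by $\mu^1$ and $\mu^2$, and in particular do not depend on which row $i$ was chosen to run the relation. This is where one uses that $e_I\cdot\mu^a$ depends on $a$ only through the single term $e_i\cdot a$ (the contribution of row $i$), so the relation cleanly separates the $I\ni i$ part from the $I\not\ni i$ part; the $I\not\ni i$ coefficients are literally inherited from $\mu^1$, and the $I\ni i$ coefficients are determined by $g_2$. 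Making this separation precise — and observing it is forced, hence independent of the choice of $i$ — is the only real content; everything else is the induction scaffolding and the automatic verification of the rectangle side condition.
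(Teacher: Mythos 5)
Your overall strategy is the same as the paper's: reduce an arbitrary $\lambda\subset\delta_m$ to the band $\delta_{m-1}\subset\lambda\subset\delta_m$ by repeatedly applying Theorem \ref{1convthm} together with Remark \ref{1imply}. The paper organizes this as an induction on the first row index $p_\lambda$ at which $\lambda$ leaves the band, handling all values of that row in one sweep via the iterated three-term relation; you organize it as a downward induction on $|\lambda|$ inside $\delta_m$, raising one row by $1$ and $2$. These are interchangeable bookkeeping schemes, and your verification of the side condition $\lambda_{n-\lambda_i-1}=\lambda_{n-\lambda_i}$ via containment in $(n-m)^m$ is exactly the paper's remark.

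There is, however, one concrete slip: you pick the \emph{largest} $i$ with $\lambda_i\le m-i-1$ and then assert $\lambda_i+2\le\lambda_{i-1}$, but this inequality can fail for that choice, and then $\mu^2$ is not even a partition. For instance, with $m=4$, $n=8$, $\lambda=(3,0,0,0)\subset\delta_4$, the largest such $i$ is $i=3$, where $\lambda_3+2=2>\lambda_2=0$. The fix is to take the \emph{smallest} such $i$: for $j<i$ one then has $\lambda_j\ge m-j$, so $\lambda_{i-1}\ge m-i+1\ge\lambda_i+2$ (with $\lambda_0=\infty$ when $i=1$), which simultaneously guarantees that $\mu^1,\mu^2$ are partitions contained in $\delta_m$ and that the hypothesis of Theorem \ref{1convthm} holds. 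With that one-word repair the induction goes through and the argument is complete; the final observation that $\delta_p\subset\delta_m$ disposes of the last sentence of the lemma, as you say.
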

\begin{proof}
During the proof, we will only use Theorem \ref{1convthm}. Let $p=p_\lambda$ be the minimum in the set $\{ i \mid \lambda_i \neq m+1-i,m-i \}$. If the set is empty, we set $p_\lambda=m$. Note that the minimum of the set can not be $m$ since $\lambda_m$ is either $0$ or $1$. \\

We use an induction on $p_\lambda$. The base case $p_\lambda=m$ follows from Theorem \ref{2schur} and Equation (\ref{linear}). Assume that Theorem \ref{convthm} is true for all $\lambda$ with $r<p_\lambda$ and let $\nu$ be a partition with $r=p_{\nu}$. For an integer $\nu_{r+1} \leq a \leq m+1-r$, let $\mu^a$ be partitions with $\mu^a_i=\nu_i$ for $i \neq r$ and $\mu^a_{r} = a$. Note that Theorem \ref{convthm} is true for $\mu^a$ with $a=m+1-r,m-r$ by the induction hypothesis. By Theorem \ref{1convthm}, if we have $\mu^{m+1-r}= g_1 + g_2$ and $\mu^{m-r} = g_1 + q g_2$, then $\mu^a= g_1 + q^{m+r-1-a} g_2$. By Remark \ref{1imply}, Theorem \ref{convthm} is true for all $\mu^a$, including $\nu$.
\end{proof}
{\it Proof of Theorem \ref{convthm}.} Without lose of generality we assume that $m\leq [n/2]$. Assume that there is $\lambda$ that does not satisfy Theorem \ref{convthm} with $f_{I,m}$ determined by Theorem \ref{fim} when $m=[n/2]$, and Proposition \ref{less} when $m<[n/2]$. Choose $\lambda$ with minimal size. We will show that for every $i$, $\lambda$ satisfies the inequality $\lambda_{i}\leq \lambda_{i+1}+1$. If this is true, then we have $\lambda\subset \delta_m$ and we have a contradiction by Lemma \ref{deltam}.\\

Assume that there is $i$ such that $\lambda_i \geq \lambda_{i+1}+2$. For $a=1,2$, define $\mu^a$ be the partition defined by $\mu^a_j= \lambda_j$ for $j \neq i$ and $\mu^a_i=\lambda_i-a$. Since $|\mu^a|<|\lambda|$, $\mu^a$ for $a=1,2$ satisfy Theorem \ref{convthm}. Therefore by Remark \ref{1imply} $\lambda$ also satisfies Theorem \ref{convthm} which makes a contradiction. \qed

Note that the same argument in the proof of Theorem \ref{convthm} can be used to prove Proposition \ref{less}, proving Theorem \ref{convthm} for $m\leq [n/2]$.



 \section{LLT polynomials for dominos and $2$-Schur functions}\label{dominos}
 In this section, we prove Theorem \ref{2schur}. To prove Theorem \ref{2schur}, we introduce generalized Hall-Littlewood polynomials and $2$-Schur functions. Then we show that $\omega(\tilde{G}_{\bmeta_a})$ appeared in Theorem \ref{fim} is the same as $2$-Schur functions up to a power of $q$.
\subsection{Generalized Hall-Littlewood polynomials}
For a symmetric function $f,g$, let $f^\perp(g)$ be the unique symmetric function satisfying $\langle g, fh \rangle = \langle f^\perp(g) , h\rangle$ for any symmetric function $h$. One can use this to define \emph{creation operators} $\varmathbb{S}_a$ for the Schur functions, defined by $\varmathbb{S}_a = \sum_{r \geq 0} (-1)^r h_{m+r} e^\perp_{r}$ where $h_r$ (resp. $e_r$) is the complete homogeneous symmetric function (resp. the elementary symmetric function) of degree $r$.

 Jing \cite{Jin} defined the operators $\varmathbb{B}_a$ for a positive integer $a$ that generalize the creation operators for Schur functions. Define
 $$\varmathbb{B}_a = \sum_{i,j \geq 0} (-1)^i q^j h_{a+i+j} e^\perp_i h^\perp_j = \sum_{j \geq 0} q^j \varmathbb{S}_{a+j} h_j^\perp.$$
 Let $H_\lambda[X;q]$ be the Hall-Littlewood polynomials. Then this family of operator has the property that
 $$\varmathbb{B}_a(H_\lambda[X;q]) = H_{(a,\lambda_1,\lambda_2,\ldots,\lambda_\ell)}[X;q].$$
Shimozono and Zabrocki generalized the operators for any partition by
$$\varmathbb{B}_\lambda = \det ( \varmathbb{B}_{\lambda_i+j-i} )_{1\leq i,j \leq \ell}.$$
Note that if $q=0$, then $\varmathbb{B}_a=\varmathbb{S}_a$ and $\varmathbb{B}_\lambda(1) = s_\lambda$.\\

For tuple of partitions $\bmlambda = (\lambda^{(0)},\lambda^{(1)}, \ldots, \lambda^{(d-1)})$, consider the \emph{generalized Hall-Littlewood polynomials} defined by 
$$K_{\bmlambda}[X;q]:=\varmathbb{B}_{\lambda^{(0)}}  \cdots \varmathbb{B}_{\lambda^{(d-1)}}(1).$$
Grojnowski and Haiman \cite[Theorem 7.5]{GH07} showed the following.

\begin{thm}\label{gh}
If $\lambda^{(i)}$ is a rectangle for all $i$ such that contents of southwest corners of $\lambda^{(i)}$ are weakly increasing and contents of southeast corners of $\lambda^{(i)}$ are weakly decreasing, $K_{\bmlambda}[X;q]$ is equal to a power of $q$ times $\omega(\tilde{G}_{\bmlambda'})$ where $\bmlambda'=(\lambda^{(0)'},\ldots, \lambda^{(d-1)'})$.
\end{thm}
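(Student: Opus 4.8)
The plan is to realize both sides of the identity inside the fermionic Fock space and to match them there, since neither the creation-operator definition of $K_{\bmlambda}$ nor the combinatorial definition of $\tilde{G}_{\bmlambda'}$ is directly comparable to the other. First I would use the vertex-operator description of Jing's operators: under the boson--fermion correspondence identifying the ring of symmetric functions with the charge-zero Fock space, each $\varmathbb{B}_a$ is a homogeneous component of a Bernstein-type vertex operator, and the Shimozono--Zabrocki determinant $\varmathbb{B}_{\lambda^{(i)}} = \det\bigl(\varmathbb{B}_{\lambda^{(i)}_r + s - r}\bigr)_{r,s}$ for a rectangle is the product of such operators whose determinantal (antisymmetrizing) form assembles a single rectangular ``slab'' of boxes. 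Applying $\varmathbb{B}_{\lambda^{(0)}} \cdots \varmathbb{B}_{\lambda^{(d-1)}}$ to the vacuum $1$ then produces a Fock-space state which, expanded in the natural basis, is a spin-weighted generating function over $d$-ribbon tableaux.

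Next I would identify that generating function with the original (spin) combinatorial LLT polynomial $G'_{\mu^{(1)}/\mu^{(2)}}[X;u]$ for the skew shape whose $d$-quotient records the rectangles. This is where the hypotheses on corner contents enter: the condition that the southwest-corner contents are weakly increasing and the southeast-corner contents weakly decreasing is exactly what places the rectangles on $d$ adjacent abacus runners so that the associated $d$-core is empty, the $d$-quotient is $(\lambda^{(0)}, \ldots, \lambda^{(d-1)})$, and \emph{no straightening} (sign cancellation) occurs among the vertex operators. Under these hypotheses the operators apply in the order dictated by the content reading word and create each rectangle intact, so the resulting state is literally the ribbon generating function $G'$ up to the minimal power of $u$ recording the base spin.

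Finally I would translate $G'$ into the new-variant polynomial $\tilde{G}$ and take conjugates. Using the equivalence of the two LLT definitions recorded in \cite[Proposition 6.17]{GH07}, with $u^2 = q$, the spin generating function becomes $\tilde{G}_{\bmlambda'}[X;q]$ up to an explicit power of $q$; the conjugation $\omega$ accounts for the passage from the rectangles $\lambda^{(i)}$ on the creation-operator side to their transposes $\lambda^{(i)'}$ on the LLT side, transposition of a diagram corresponding to reflecting each ribbon, i.e. to $u \mapsto u^{-1}$ together with $\omega$. Collecting the powers of $u$ and $q$ then yields $K_{\bmlambda}[X;q] = q^{\bullet}\, \omega \tilde{G}_{\bmlambda'}[X;q]$ for the appropriate exponent.

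The main obstacle is the second step: verifying that the corner-content hypotheses correspond precisely to a collision-free abacus configuration, so that the product of determinantal operators creates each rectangle as an undisturbed slab without the antisymmetrizing cancellations that would otherwise contribute lower-order correction terms. Equivalently, one must prove that under these hypotheses the Shimozono--Zabrocki operators satisfy the expected sign-commutation in the content order, so that $K_{\bmlambda}$ is a single spin-monotone ribbon sum rather than a genuinely signed combination. Tracking the spin statistic through this reduction and converting it into the power of $q$ is the delicate computational heart of the argument.
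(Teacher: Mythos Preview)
The paper does not prove this statement at all: it is quoted verbatim as \cite[Theorem 7.5]{GH07} and used as a black box. There is therefore no ``paper's own proof'' to compare your proposal against. Your sketch is an attempt to reconstruct the Grojnowski--Haiman argument itself, which lies entirely outside the present paper.

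That said, your outline is in the right spirit for what \cite{GH07} does, but it is only a plan, not a proof. The genuinely hard step you flag --- showing that the corner-content hypotheses guarantee a collision-free configuration so that the product of Shimozono--Zabrocki operators produces a single positive ribbon sum rather than a signed one --- is exactly the substance of the result, and you have not carried it out. You would need to make precise the claimed correspondence between the content inequalities and the abacus/runner picture, prove the required commutation or straightening-free property of the $\varmathbb{B}_{\lambda^{(i)}}$ in that regime, and then track the spin-to-inversion conversion explicitly. As written, the proposal identifies the architecture of an argument but defers every nontrivial verification; for the purposes of this paper, simply citing \cite[Theorem 7.5]{GH07} is what is intended.
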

For our case, we set $\bmlambda=\bmeta_a$ with $a=(0,0,\ldots,0,1,\ldots, 1)$ so $\lambda^{(i)}$ are either a horizontal domino, a vertical domino, or a single box. We show in Subsection 5.3 that $\omega(\tilde{G}_{\bmeta_a})$ only depends on the number $\ell$ of vertical dominos.
\subsection{$k$-Schur functions}
In this subsection, we show that for $a=(1^\ell,0^{m-\ell})$ and $m=[n/2]$, $K_{\bmeta_a}$ is equal to $\kst_{2^\ell 1^{n-\ell}}$.
We first recall the algebraic definition of the $k$-Schur functions \cite{LM01}. A partition $\lambda$ is called $k$-bounded if $\lambda_1\leq k$. For a partition $\lambda$, let $h(\lambda)$ be the main hook-length of $\lambda$ defined by $\lambda_1+ \ell(\lambda)-1$ where $\ell(\lambda)$ is the number of parts in $\lambda$. For a $k$-bounded partition $\lambda$, let $\lambda^{\rightarrow k}$, called the $k$-split of $\lambda$ be $(\lambda^{(1)},\ldots,\lambda^{(r)})$ where its concatenation is equal to $\lambda$, $h(\lambda^{(i)})=k$ for all $i<r$, and $h(\lambda^{(r)})\leq k$. For example, $(3,2,2,2,1,1)^{\rightarrow 3} = \left( (3),(2,2),(2,1),(1) \right)$.\\

For a $k$-bounded partition, let $\lambda^{\rightarrow k} =(\lambda^{(1)},\ldots,\lambda^{(r)})$. The $k$-split polynomials are defined recursively by 
$$C^{(k)}_\lambda[X;q] = \varmathbb{B}_{\lambda^{(1)}} C^{(k)}_{(\lambda^{(2)},\ldots, \lambda^{(r)})}$$
with $C^{(k)}_{()}=1$. Let $T^{(k)}$ be an operator acting on the ring of symmetric functions defined by
$$ T^{(k)}_i (C^{(k)}_\lambda[X;q] ) = \begin{cases} C^{(k)}_\lambda[X;q] &\text{if } \lambda_1=i, \\ 0 & \text{otherwise.} \end{cases}$$

Now we are ready to define $k$-Schur functions recursively.
\begin{definition}\label{rectangle}
For a $k$-bounded partition $\lambda$, if $\ell(\lambda)=1$ and $a\leq k$, then $s^{(k)}_{a}$ is the Schur function $s_{a}$. Otherwise for $\lambda_1 \leq m \leq k$, we define $k$-Schur functions by the following recursion:
$$ s^{(k)}_{(m,\lambda)}=T^{(k)}_m \varmathbb{B}_m s^{(k)}_\lambda[X;q].$$
\end{definition}

For $k=2$, the following $k$-rectangular property \cite[Theorem 26]{LM01} of $k$-Schur functions are useful.
\begin{thm}\label{krec} If $\mu,\nu,\lambda$ are partitions where $\lambda=(\mu,\nu)$ and $\mu_{\ell(\mu)}>\ell \geq \nu_1$, then
$$\varmathbb{B}_{\ell^{k+1-\ell}} s^{(k)}_\lambda[X;q] = q^{|\mu|- \ell(\mu)} s^{(k)}_{(\ell^{k+1-\ell}) \cup \lambda}[X;q].$$
In particular, if $\ell\geq \lambda_1$ then 
$$\varmathbb{B}_{\ell^{k+1-\ell}} s^{(k)}_\lambda[X;q] = s^{(k)}_{(\ell^{k+1-\ell}) \cup \lambda}[X;q].$$
\end{thm}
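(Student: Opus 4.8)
The plan is to reduce Theorem \ref{krec} to the case $k=2$ (the only case used below; the general statement is \cite[Theorem 26]{LM01}, and the $k=2$ argument is a model for it) and to run everything through the $k$-split basis $C^{(2)}$. For $k=2$ the admissible rectangles $\ell^{\,k+1-\ell}$ are $(2)$ (for $\ell=2$) and $(1,1)$ (for $\ell=1$). Since $\mu$ is $2$-bounded, $\mu_{\ell(\mu)}>\ell$ forces $\mu=\varnothing$ when $\ell=2$ (so $|\mu|-\ell(\mu)=0$ and the two displayed formulas coincide), and $\mu=(2^a)$ for some $a\ge 0$ when $\ell=1$; and $\ell\ge\nu_1$ leaves $\nu$ an arbitrary $2$-bounded partition when $\ell=2$, and forces $\nu=(1^b)$ when $\ell=1$. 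So Theorem \ref{krec} for $k=2$ amounts to the two identities
$$\varmathbb{B}_{(2)}\,s^{(2)}_\nu[X;q]=s^{(2)}_{(2)\cup\nu}[X;q]\ \ (\nu_1\le 2),\qquad
\varmathbb{B}_{(1,1)}\,s^{(2)}_{2^a 1^b}[X;q]=q^{\,a}\,s^{(2)}_{2^a 1^{b+2}}[X;q],$$
the two ``in particular'' cases being the first identity and the $a=0$ instance of the second.

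The key preliminary point is that every $2$-bounded partition $\lambda=(2^a 1^b)$ has $k$-split $\lambda^{\rightarrow 2}$ consisting only of rectangles (copies of $(2)$ and $(1,1)$, plus possibly a final $(1)$), so that the truncations $T^{(2)}_i$ of Definition \ref{rectangle} are never active on these shapes and hence $s^{(2)}_\lambda=C^{(2)}_\lambda$; this can be cited as the known equality $s^{(k)}_\lambda=C^{(k)}_\lambda$ for $\lambda^{\rightarrow k}$ a product of rectangles, or checked directly by induction on $\lambda$. Granting this, the first identity is immediate from the recursive definition of $C^{(2)}$: $\varmathbb{B}_{(2)}\,C^{(2)}_\nu=C^{(2)}_{(2,\nu)}$ whenever $\nu_1\le 2$ (because $h((2))=2=k$, so prepending $(2)$ to $\nu^{\rightarrow 2}$ yields the $2$-split of $(2,\nu)$), and $(2,\nu)=(2)\cup\nu$ is again $2$-bounded. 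In the same way $\varmathbb{B}_{(1,1)}\,C^{(2)}_{1^b}=C^{(2)}_{1^{b+2}}$, which is exactly the base case $a=0$ of the second identity.

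For the second identity with $a\ge 1$ I would iterate the first identity to write $s^{(2)}_{2^a 1^b}=\varmathbb{B}_{(2)}^{\,a}\,C^{(2)}_{1^b}$, and then use the single commutation relation
$$\varmathbb{B}_{(1,1)}\,\varmathbb{B}_{(2)}\;=\;q\,\varmathbb{B}_{(2)}\,\varmathbb{B}_{(1,1)}$$
on the subspace spanned by the relevant $C^{(2)}$'s to move $\varmathbb{B}_{(1,1)}$ past each of the $a$ copies of $\varmathbb{B}_{(2)}$; the base case then finishes the computation and the $a$ factors of $q$ assemble into $q^{\,a}=q^{\,|\mu|-\ell(\mu)}$. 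Concretely,
$$\varmathbb{B}_{(1,1)}\,s^{(2)}_{2^a 1^b}=\varmathbb{B}_{(1,1)}\varmathbb{B}_{(2)}^{\,a}C^{(2)}_{1^b}=q^{\,a}\varmathbb{B}_{(2)}^{\,a}\varmathbb{B}_{(1,1)}C^{(2)}_{1^b}=q^{\,a}\varmathbb{B}_{(2)}^{\,a}C^{(2)}_{1^{b+2}}=q^{\,a}C^{(2)}_{2^a 1^{b+2}}=q^{\,a}s^{(2)}_{2^a 1^{b+2}}.$$

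The step I expect to be the main obstacle is establishing the commutation relation $\varmathbb{B}_{(1,1)}\varmathbb{B}_{(2)}=q\,\varmathbb{B}_{(2)}\varmathbb{B}_{(1,1)}$ on the relevant subspace — equivalently, that inserting a horizontal rectangle $(2)$ below a vertical rectangle $(1,1)$ and straightening the resulting non-partition $k$-split index $(1,1,2,\dots)$ costs exactly one power of $q$. This is essentially a $2\times2$ instance of the rectangular straightening inside the $k$-Schur/$k$-split calculus of \cite{LM01}, and the natural way to obtain it is from the commutation (vertex-operator) relations among Jing's operators $\varmathbb{B}_c$ that underlie the determinant formula $\varmathbb{B}_\lambda=\det(\varmathbb{B}_{\lambda_i+j-i})$, together with a check that the $T^{(2)}$-truncations discard no needed term. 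Once this relation is in hand, the two identities and hence Theorem \ref{krec} for $k=2$ follow as above; the general-$k$ case is handled in \cite{LM01} along the same lines, with $\varmathbb{B}_{\ell^{\,k+1-\ell}}$ in place of $\varmathbb{B}_{(1,1)}$ and the larger rectangles $m^{\,k+1-m}$ ($m>\ell$) in place of $\varmathbb{B}_{(2)}$.
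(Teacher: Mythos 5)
First, a point of comparison: the paper does not prove Theorem \ref{krec} at all --- it is imported verbatim as \cite[Theorem 26]{LM01}, so there is no internal proof to measure your argument against. Your reduction of the $k=2$ case to the two identities $\varmathbb{B}_{(2)}s^{(2)}_\nu = s^{(2)}_{(2)\cup\nu}$ and $\varmathbb{B}_{(1,1)}s^{(2)}_{2^a1^b}=q^a s^{(2)}_{2^a1^{b+2}}$ is correct, as is the bookkeeping $|\mu|-\ell(\mu)=a$ and the observation that the $2$-split of $(2,\nu)$ (resp.\ $1^{b+2}$) begins with the rectangle $(2)$ (resp.\ $(1,1)$).

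However, the proposal has two genuine gaps, and together they leave the theorem unproved. First, you assume $s^{(2)}_\lambda=C^{(2)}_\lambda$ for all $2$-bounded $\lambda$ on the grounds that the truncations $T^{(2)}_i$ are ``never active.'' That is not evident from Definition \ref{rectangle}: the recursion for $s^{(2)}$ uses the single-row operators $\varmathbb{B}_m$, not the rectangle operators, and already $\varmathbb{B}_1\varmathbb{B}_1(1)=H_{(1,1)}=s_{11}+qs_2$ has a component with first part $2$ that $T^{(2)}_1$ must discard, so the truncation does act nontrivially. Worse, in this paper's own logical architecture the equality $s^{(2)}_\lambda=C^{(2)}_\lambda$ (Theorem \ref{g2schur}) is \emph{deduced from} Theorem \ref{krec}; invoking it as an input is circular unless you supply an independent proof. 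Second, the commutation relation $\varmathbb{B}_{(1,1)}\varmathbb{B}_{(2)}=q\,\varmathbb{B}_{(2)}\varmathbb{B}_{(1,1)}$ on the relevant subspace, which you yourself flag as ``the main obstacle,'' is not a routine consequence of Jing's two-term relations: unwinding $\varmathbb{B}_{(1,1)}=\varmathbb{B}_1\varmathbb{B}_1-\varmathbb{B}_2\varmathbb{B}_0$ and straightening produces extra terms whose cancellation (or annihilation by the truncations) is precisely the content of the $k$-rectangle property being proved. In other words, the step you defer is not a lemma feeding into the theorem --- it \emph{is} the theorem, restated at the operator level. As written, the proposal is a correct reduction plus a citation of the hard part, which is essentially what the paper itself does by citing \cite{LM01}.
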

Theorem \ref{krec} implies that $2$-Schur functions are the same as generalized Hall-Littlewood polynomials.

\begin{thm}\label{g2schur} Let $\lambda=(2^\ell, 1^{n-2\ell})$. Then a $2$-Schur function $\kst_{\lambda}[X;q]$ is the same as the generalized Hall-Littlewood polynomial $K_{\lambda^{\rightarrow 2}}[X;q]$ indexed by $2$-split of $\lambda$.

\end{thm}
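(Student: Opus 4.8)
The plan is to prove the equivalent identity $\kst_\lambda[X;q]=C^{(2)}_\lambda[X;q]$ for $\lambda=(2^\ell,1^{n-2\ell})$, which is all that is needed since $C^{(2)}_\lambda=K_{\lambda^{\rightarrow 2}}$ by the recursive definition of the $2$-split polynomials. I would identify both sides with the same product of rectangular creation operators applied to $1$. On the Hall-Littlewood side this is immediate once the $2$-split is read off: since $h((2))=h((1,1))=2$, the blocks of $\lambda^{\rightarrow 2}$ are $\ell$ copies of the row $(2)$, then $\lfloor(n-2\ell)/2\rfloor$ copies of the column $(1,1)$, with an extra block $(1)$ appended precisely when $n$ is odd. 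Writing $\varepsilon=1$ if $n$ is odd and $0$ otherwise, and using $\varmathbb{B}_{(1)}=\varmathbb{B}_1$ together with $C^{(2)}_{()}=1$, this gives $C^{(2)}_\lambda=\varmathbb{B}_{(2)}^{\ell}\,\varmathbb{B}_{(1,1)}^{\lfloor(n-2\ell)/2\rfloor}\,\varmathbb{B}_1^{\varepsilon}(1)$, so it suffices to produce the same factorization of $\kst_\lambda$.

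For the $2$-Schur side I would build $\kst_\lambda$ from the bottom up using only the $q$-free clause of Theorem~\ref{krec} (its ``in particular'' statement), which applies whenever the rectangle being prepended is at least as wide as the current first part. Starting from $\kst_{()}=1$ and applying Theorem~\ref{krec} with $k=2$ and width $1$ (rectangle $(1,1)$, hypothesis $1\geq\mu_1$ valid for any column $\mu=1^b$), iteration gives $\kst_{1^{n-2\ell}}=\varmathbb{B}_{(1,1)}^{\lfloor(n-2\ell)/2\rfloor}\varmathbb{B}_1^{\varepsilon}(1)$, where the trailing $\varmathbb{B}_1^{\varepsilon}$ comes from $\kst_{(1)}=s_1=\varmathbb{B}_1(1)$ in the odd case (by Definition~\ref{rectangle}). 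Next, applying Theorem~\ref{krec} with $k=2$ and width $2$ (rectangle $(2)$, hypothesis $2\geq\mu_1$ valid since each $\mu=(2^i,1^{n-2\ell})$ is $2$-bounded) gives $\varmathbb{B}_{(2)}\kst_{(2^i,1^{n-2\ell})}=\kst_{(2^{i+1},1^{n-2\ell})}$ for $0\leq i<\ell$. Composing the two chains of moves, $\kst_\lambda=\varmathbb{B}_{(2)}^{\ell}\varmathbb{B}_{(1,1)}^{\lfloor(n-2\ell)/2\rfloor}\varmathbb{B}_1^{\varepsilon}(1)$, which is exactly the expression for $C^{(2)}_\lambda$ found above, and the theorem follows.

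No step here is genuinely deep once Theorem~\ref{krec} is in hand — the argument is careful bookkeeping — but a few points need attention: (i) verifying that every use of Theorem~\ref{krec} really lands in its $q$-free clause, i.e.\ that the current first part is never strictly larger than the width of the prepended rectangle, so that no spurious power of $q$ appears on the $2$-Schur side; (ii) getting the $2$-split $\lambda^{\rightarrow 2}$ and the parity of $n$ exactly right, in particular the degenerate cases $\ell=0$, $\ell=\lfloor n/2\rfloor$, and $n-2\ell\in\{0,1\}$; and (iii) recording at the outset that $(2^\ell,1^{n-2\ell})$ is $2$-bounded, so that $\kst_\lambda$, $C^{(2)}_\lambda$, and $\lambda^{\rightarrow 2}$ are all defined.
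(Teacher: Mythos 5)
Your proposal is correct and follows essentially the same route as the paper, which proves Theorem \ref{g2schur} simply by invoking the $k$-rectangle property (Theorem \ref{krec}); you have merely written out in full the iteration of its $q$-free clause over the blocks of $\lambda^{\rightarrow 2}$ that the paper leaves implicit. Your bookkeeping of the $2$-split, the parity cases, and the identification $K_{\lambda^{\rightarrow 2}}=C^{(2)}_\lambda$ is accurate.
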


By Theorem \ref{g2schur} and Theorem \ref{gh}, we know that $\omega(\tilde{G}_{\bmeta_a})$ for $a=(1^\ell,0^{m-\ell})$ is equal to a power of $q$ times $\kst_{2^\ell,1^{n-\ell}}$. To prove Theorem \ref{2schur}, we show that $\omega(\tilde{G}_{\bmeta_a})$ only depends on the number $\ell$ of vertical dominos, and calculate the exponent of $q$.

\subsection{Comparing different LLT polynomials}
 For a skew shape $\lambda$, let the content set of $\lambda$ be the set of contents of all boxes in $\lambda$. 
To prove that $\omega(\tilde{G}_{\bmeta_a})$ only depends on $\ell$, we show the following theorem.

\begin{lem}\label{commute} Let $\bmlambda$ be $d$-tuple of skew partitions such that $\lambda^{(i)}$ is a horizontal domino with a content set $\{0,1\}$ and $\lambda^{(i+1)}$ is a vertical domino with the content set $\{0,1\}$. Then 
$$\tilde{G}_\bmlambda=\tilde{G}_\bmmu$$
where $\bmmu$ is obtained from $\bmlambda$ by swapping $\lambda^{(i)}$ and $\lambda^{(i+1)}$.

\end{lem}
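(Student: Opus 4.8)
The plan is to prove Lemma~\ref{commute} by exhibiting an explicit weight-preserving, inversion-preserving bijection between $\ssty_d(\bmlambda)$ and $\ssty_d(\bmmu)$, where the only change between $\bmlambda$ and $\bmmu$ is that a horizontal domino in position $i$ with content set $\{0,1\}$ is swapped with a vertical domino in position $i+1$ with content set $\{0,1\}$. First I would set up coordinates: the shifted contents involved in the two dominos are $\tilde c = d\cdot 0 + i,\ d\cdot 1 + i$ for the horizontal domino $\lambda^{(i)}$ and $\tilde c = d\cdot 0 + (i+1),\ d\cdot 1 + (i+1)$ for the vertical domino $\lambda^{(i+1)}$. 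So the four cells occupy shifted contents $i,\ i+1,\ d+i,\ d+i+1$. After swapping, the vertical domino sits in position $i$ (shifted contents $i,\ d+i$) and the horizontal domino in position $i+1$ (shifted contents $i+1,\ d+i+1$). The key observation is that the multiset of shifted contents $\{i,\ i+1,\ d+i,\ d+i+1\}$ is unchanged by the swap, and more importantly the cyclic/interval structure relevant to $\Inv_d$ (which only compares cells whose shifted contents differ by less than $d$) is the same.

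The bijection $\phi$ itself should be the identity on all cells outside the two dominos, and on the four relevant cells it simply reinterprets which cell belongs to which tuple-component. Concretely, writing a semistandard filling of $\bmlambda$ as assigning values $(a,b)$ to the horizontal domino (with $a\le b$, reading left-to-right, which are the cells of shifted content $i$ and $d+i$) and $(c,d')$ to the vertical domino (with $c < d'$, reading bottom-to-top, cells of shifted content $i+1$ and $d+i+1$) — wait, I need to be careful about which cell of a vertical domino has which content. For a vertical domino $(1,1)$, the bottom cell has content $0$ and the top cell has content $-1$, or depending on convention content $0$ and $1$; the statement fixes the content set to $\{0,1\}$, so the two cells have contents $0$ and $1$ with the content-$1$ cell being the lower one (since in a column, going down increases the row index and decreases content — actually content $= \text{col} - \text{row}$, so going down decreases content). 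I would pin down the convention so that in both dominos the content-$0$ cell and content-$1$ cell are identified, and $\phi$ sends the filling with content-$0$ value $=\alpha$, content-$1$ value $=\beta$ on the first domino and content-$0$ value $=\gamma$, content-$1$ value $=\delta$ on the second domino to the filling on $\bmmu$ with the same values placed on the cells of matching content. The semistandardness constraints differ (horizontal domino requires weak increase along content, vertical requires strict increase along content), so I would need to check $\phi$ actually lands in $\ssty_d(\bmmu)$; this is where the argument has real content rather than being a triviality.

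Here is where I expect the main obstacle: the semistandardness conditions are \emph{not} symmetric under the swap — a horizontal domino with contents $0<1$ imposes a weak inequality between its two entries, while a vertical domino imposes a strict inequality. So a naive "move the values to the cells of the same content" map will not be a bijection between $\ssty_d(\bmlambda)$ and $\ssty_d(\bmmu)$. The right fix is to decompose the computation the way Lemma~\ref{4.8} already suggests: apply Lemma~\ref{4.8} to \emph{both} $\bmlambda$ and $\bmmu$ to split off each domino into a pair of single boxes, reducing the claim to a statement about $d$-tuples (really $(d+2)$-tuples) of single boxes with a common content multiset $\{0,0,1,1\}$ in four consecutive components. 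For single boxes, $\tilde G$ depends only on the permutation $w_{\bmlambda}$ (equivalently on the pattern of shifted contents), via Proposition~\ref{D}; and since replacing a horizontal domino by its two single boxes in one order versus replacing a vertical domino by its two single boxes in the other order yields the same multiset of shifted contents in the same relative position, the resulting single-box tuples are literally equal (or related by the identity permutation), so the two sides of Lemma~\ref{commute} agree term by term in $q$. I would therefore carry out the steps in this order: (1) fix content conventions for the two domino types; (2) invoke Lemma~\ref{4.8} twice to write $\tilde G_{\bmlambda}$ and $\tilde G_{\bmmu}$ each as a $q$-weighted sum of four LLT polynomials indexed by tuples of single boxes; (3) match the four terms on each side by comparing shifted-content data, using that $\{0,1\}$ as a content set makes the two decompositions produce the same single-box configurations with the same powers of $q$ attached; (4) conclude $\tilde G_{\bmlambda} = \tilde G_{\bmmu}$. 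The subtle point to get exactly right in step (3) is the bookkeeping of the powers of $q=u^2$ — i.e. the $q^\epsilon$ factors in Lemma~\ref{4.8} — and verifying that the horizontal-then-vertical split and the vertical-then-horizontal split contribute identical total exponents; checking that this matches is the one genuinely nontrivial computation.
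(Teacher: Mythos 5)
Your plan has a genuine gap at its central step. Lemma~\ref{4.8} is a one-way relation: it expresses the LLT polynomial of a tuple containing two adjacent \emph{single boxes} of contents $\epsilon$ and $1-\epsilon$ as the sum $\tilde{G}_{\bmmu_0}+q^{\epsilon}\tilde{G}_{\bmmu_1}$ over the two ways of merging them into a domino. It does not let you ``split off each domino into a pair of single boxes'': there is no expression for $\tilde{G}$ of a single fixed domino type in terms of single-box tuples. Applying Lemma~\ref{4.8} twice to the relevant four-box configuration only yields one identity of the shape $\tilde{G}_{\text{singles}} = \tilde{G}_{HH}+q^{?}\tilde{G}_{HV}+q^{?}\tilde{G}_{VH}+q^{?}\tilde{G}_{VV}$, i.e.\ one linear equation in four unknowns, from which the desired conclusion $\tilde{G}_{HV}=\tilde{G}_{VH}$ cannot be extracted. (Indeed, in the paper the logical order is the reverse of what you propose: Lemma~\ref{commute} is an independent input that, \emph{together} with Lemma~\ref{4.8}, pins down the decomposition; it is not a consequence of Lemma~\ref{4.8}.) So steps (2)--(4) of your outline do not go through.

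Your first instinct --- a direct bijection on fillings --- is the right one, and you correctly diagnosed why the naive ``move each value to the cell of matching content'' map fails. But the fix is not to abandon the bijection; it is to allow the map to redistribute the four entries between the two dominos in a way that depends on their relative order. The paper works with standard tuples (via the expansion $\tilde{G}_{\bmlambda}=\sum_{\bmt\in\sty_d(\bmlambda)}q^{\inv_d(\bmt)}F_{\des(\bmt)}$), reduces to $d=2$ since entries outside the two dominos contribute identically, and then defines the bijection $\Psi$ by a six-case analysis on the ordering of the four entries $a,b,c,d$; in several cases the entries genuinely migrate between the two dominos (e.g.\ vertical $\begin{ytableau}a\\b\end{ytableau}$, horizontal $\begin{ytableau}c&d\end{ytableau}$ with $a>d>b>c$ maps to horizontal $\begin{ytableau}c&b\end{ytableau}$, vertical $\begin{ytableau}a\\d\end{ytableau}$), and one checks case by case that the inversion count and the content reading word are preserved. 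That explicit combinatorial check is the real content of the lemma, and it is missing from your proposal.
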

\begin{proof}

We construct a bijection $\Psi$ between $\sty(\bmlambda)$ and $\sty(\bmmu)$ preserving the inversion number and entries in $\lambda^{(j)}$ for $j\neq i,i+1$ to prove the theorem. The bijection will also preserve the set of entries at the diagonal with the content $0$ so that we can assume $i=0$ and $d=2$. Indeed, we are only changing entries in $i$-th and $(i+1)$-th partitions of $\bm{T}=(T^{(0)},\ldots, T^{(d-1)})$ so that the inversion set $\Inv_d(\bm{T})$ does not change except paris $(x,y)$ satisfying $x,y \subset T^{(i)} \cup T^{(i+1)}$.\\

Assume that $i=0$ and $d=2$. For $\bm{T} \in \sty(\bmlambda)$, let 
$$T^{(0)}= \begin{ytableau} a\\b\end{ytableau},\quad T^{(1)}= \begin{ytableau} c&d \end{ytableau}.$$
There are six possible cases for an order of $a,b,c,d$ and we define $\Psi$ case by case.\\
\begin{enumerate}
\item $a>d>c>b$: then the number of inversion is $2$ and set 
$$\Psi(\bm{T})^{(0)}= \begin{ytableau} c & d \end{ytableau} , \quad \Psi(\bm{T})^{(1)}= \begin{ytableau} a \\ b \end{ytableau}.$$
In this case, two pairs $(a,d)$ and $(d,b)$ are inversion pairs.
\item $a>d>b>c$: then the number of inversion is $1$ and set 
$$\Psi(\bm{T})^{(0)}= \begin{ytableau} c & b \end{ytableau} , \quad \Psi(\bm{T})^{(1)}= \begin{ytableau} a \\ d \end{ytableau}.$$
In this case, a pair $(a,b)$ is an inversion pair.
\item $d>a>c>b$: then the number of inversion is $2$ and set 
$$\Psi(\bm{T})^{(0)}= \begin{ytableau} a & d \end{ytableau} , \quad \Psi(\bm{T})^{(1)}= \begin{ytableau} c \\ b \end{ytableau}.$$
In this case, two pairs $(a,c)$ and $(d,b)$ are inversion pairs.
\item $d>a>b>c$: then the number of inversion is $1$ and set 
$$\Psi(\bm{T})^{(0)}= \begin{ytableau} c & d \end{ytableau} , \quad \Psi(\bm{T})^{(1)}= \begin{ytableau} a \\ b \end{ytableau}.$$
In this case, a pair $(d,b)$ is an inversion pair.
\item $a>b>d>c$: then the number of inversion is $2$ and set 
$$\Psi(\bm{T})^{(0)}= \begin{ytableau} c & b \end{ytableau} , \quad \Psi(\bm{T})^{(1)}= \begin{ytableau} a \\ d \end{ytableau}.$$
In this case, pairs $(a,b)$ and $(b,d)$ are inversion pairs.
\item $d>c>a>b$: then the number of inversion is $1$ and set 
$$\Psi(\bm{T})^{(0)}= \begin{ytableau} a & d \end{ytableau} , \quad \Psi(\bm{T})^{(1)}= \begin{ytableau} c \\ b \end{ytableau}.$$
In this case, a pair $(d,b)$ is an inversion pair.

\end{enumerate}
\end{proof}
\subsection{Proof of Theorem 4.6}
Now we are ready to prove Theorem \ref{2schur}. First note that by Theorem \ref{gh}, Theorem \ref{g2schur}, and Lemma \ref{commute}, we know that $\omega(\tilde{G}_{\bmeta_a})$ is equal to $q^{p} \kst_{2^\ell 1^{n-2\ell}}$ for some integer $p$ where $\ell$ is the number of $1$ in $a$. To prove Theorem \ref{2schur} we only need to show that $p=M$ for even $n$ and $p=M'$ for odd $n$. Note that $p$ is the minimum exponent of $q$ appearing in $\omega(\tilde{G}_{\bmeta_a})$, since a $2$-Schur function $\kst_{2^\ell 1^{n-2\ell}}$ has a term $s_{2^\ell 1^{n-2\ell}}$ with the minimum exponent $0$ in its Schur expansion. We show $p=M$ for even $n$, and left to readers for odd $n$.\\

By Lemma \ref{commute}, one can assume that $a=(0^{m-\ell}1^\ell)$. First we show that $p\geq M=\sum_{i=1}^\ell (m-i)$. For $\bm{T}\in \sty(\bmeta_a)$, $m-\ell+1\leq i\leq m$, and $j<i$, consider possible inversion pairs $(x,y)$ where $x$ is in $T^{(j)}$ and $y$ is in $T^{(i)}$. Since $T^{(i)}$ is a vertical domino, there must be at least one inversion pair $(x,y)$ where the cell $x$ is in $T^{(j)}$ with a content $1$ and $y$ is a cell in $T^{(i)}$. Note that the number of $(i,j)$ satisfying $m-\ell+1\leq i\leq m$, and $j<i$ is exactly $M$. Then it is enough to find $\bm{T}\in \sty(\bmeta_a)$, so that there is no other inversion pairs. Indeed, one can choose $\bm{T}$ so that for $0\leq \alpha\leq m-\ell-1$, numbers $\alpha+1$ and $m+\ell+\alpha+1$ appear in $T^{(\alpha)}$ and for $m-\ell \leq \alpha \leq m-1$, numbers $\alpha+1$ and $\alpha+\ell+1$ appear in $T^{(\alpha)}$. Then the theorem follows. \qed



\section{Haiman-Hugland conjecture for $k=2$}
In this section, we provide the $2$-Schur expansion of the LLT polynomial indexed by skew shapes lying in two adjacent diagonals, confirming Conjecture \ref{haiman} for $k=2$. Recall that for a skew shape $\lambda$, let the content set of $\lambda$ be the set of contents of all boxes in $\lambda$. \\

Let $\bmlambda=(\lambda^{(0)},\lambda^{(1)}, \ldots, \lambda^{(d-1)})$ be the $d$-tuple of skew shapes such that $\lambda_{(i)}$ is either a vertical domino with content set $\{0,1\}$, a horizontal domino with content set $\{0,1\}$, a single cell with content $0$, or a single cell with content $1$. Let $n$ be the number of boxes in $\bmlambda$, and let $m$ be the number of boxes in $\bmlambda$ with content $1$. Without loss of generality, one can assume that $m\leq [n/2]$ because otherwise one can replace $\bmlambda$ by the \emph{conjugate} of $\bmlambda$, defined by $\bmmu$ with $\mu^{(i)}=g(\lambda^{(d-1-i)})$ where the map $g(\mu)$ is the identity if $\mu$ is one of dominos and $g(\mu)$ is a cell with content $1-i$ if $i$ is the content of a single cell $\mu$. One can show that LLT polynomials does not change.\\

For $1 \leq i \leq n$, let $x_i$ be the cell in $\bmlambda$ such that the shifted content $\tilde{c}_i$ of the cell $x_i$ satisfies $\tilde{c}_1<\tilde{c}_2<\cdots<\tilde{c}_n$. For a positive integer $i\leq n$, $f'(i)$ is the cardinality of the set 
$$\{j \mid \tilde{c}_j \leq \tilde{c}_i-d\}.$$
Note that we have $0\leq f'(i) \leq i-1$ and $f'(i)$ is weakly increasing as $i$ increases.\\

Let $\lambda$ be the partition defined by $\lambda_i=f'(n-i+1)$. Then $\lambda$ is contained in a rectangle $(n-m)^m$. Recall Theorem \ref{convthm} that we have
$$L(n,\lambda)= \sum_{I \subset [m]} f_{I,m} q^{- e\cdot \lambda} $$
where $e\cdot \lambda = \sum_{j=1}^m e_i \lambda_i$ and $2$-Schur positive functions $f_{I,m}$. For given $\bmlambda$, define a subset $K \subset \{ 1,2,\ldots,m \}$ satisfying $i \in K$ if and only if $x_{n+1-i}$ is contained in either a vertical domino and horizontal domino. Also for $i \in K$ define $\zeta_i$ by 0 if $x_{n+1-i}$ is contained in a horizontal domino, and 1 if $x_{n+1-i}$ is contained in a vertical domino. By Theorem \ref{convthm} and Lemma \ref{4.8}, one can show the following:

\begin{thm}\label{haiman2}For $m\leq [n/2]$, we have
$$G^{(n)}_\bmlambda= \sum_{\substack{I \subset \{1, \ldots, m\} \\ e_i=\zeta_{i} \text{ if } i \in J}} f_{I,m} q^{- e\cdot \lambda-z}$$
where $f_{I,m}$'s are determined by Theorem \ref{half} and Proposition \ref{less}, and $z$ is the number of vertical dominos in $\bmlambda$.
\end{thm}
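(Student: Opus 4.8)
The plan is to reduce the general LLT polynomial $G^{(n)}_{\bmlambda}$ indexed by dominos and single cells to the unicellular case by repeatedly applying Lemma~\ref{4.8}, and then invoke Theorem~\ref{convthm} together with Theorem~\ref{half} and Proposition~\ref{less}. First I would observe that each piece $\lambda^{(i)}$ of $\bmlambda$ that is a domino with content set $\{0,1\}$ can be ``split'' into a pair of adjacent single cells: a horizontal domino $\eta_0$ is (by reading order) the degenerate case $\epsilon=0$ of Lemma~\ref{4.8} and a vertical domino $\eta_1$ is the case $\epsilon=1$. Running Lemma~\ref{4.8} in reverse on $\bmlambda$, every horizontal domino contributes a factor accounting for $\tilde G_{\bmmu_0}+\tilde G_{\bmmu_1}$ with no $q$, while every vertical domino contributes the same with a factor of $q$; iterating over all dominos produces $G^{(n)}_{\bmlambda}$ as a sum over choices, with total $q$-shift equal to $z$, the number of vertical dominos, and where the ``choice'' at domino $i$ of whether the box of content $1$ sits below ($\mu_1$) or the box of content $0$ is alone is exactly recorded by the constraint $e_i=\zeta_i$ in the statement.

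Next I would identify the resulting sum of unicellular LLT polynomials with a single $L(n,\lambda)$-type expansion. After splitting, the $d$-tuple becomes a tuple of single cells of contents $0$ or $1$, i.e.\ the situation of Section~3: the associated partition is $\lambda$ as defined just before the theorem via $\lambda_i=f'(n-i+1)$, and it lies in the rectangle $(n-m)^m$ by construction since $m$ boxes have content $1$. The key bookkeeping step is to match the index set: a subset $I\subset[m]$ appearing in Theorem~\ref{convthm} corresponds to choosing, for each row index $i$, whether the $i$-th ``content $1$'' box is to be counted with the $q^{-\lambda_i}$ weight; the boxes $x_{n+1-i}$ lying in an honest domino are precisely those in $K$, and for such $i$ the split forces the parity $e_i=\zeta_i$ (horizontal $\Rightarrow e_i=0$, vertical $\Rightarrow e_i=1$), whereas for $i\notin K$ the box was already a free single cell and $e_i$ ranges freely. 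Collecting, $G^{(n)}_{\bmlambda}=q^{-z}\sum_{I:\,e_i=\zeta_i\ \forall i\in K} f_{I,m}\,q^{-e\cdot\lambda}$, which is the claimed formula (the paper writes $J$ for what I am calling $K$).

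Concretely the steps are: (1) state precisely how a domino $\eta_\epsilon$ with content set $\{0,1\}$ equals the two-cell configuration of Lemma~\ref{4.8} and record its $q$-weight ($1$ if $\epsilon=0$, $q$ if $\epsilon=1$); (2) apply Lemma~\ref{4.8} once per domino, in any fixed order, to expand $G^{(n)}_{\bmlambda}=\sum_{\text{choices}} q^{(\#\text{vertical dominos chosen to contribute }q)}\,G_{\bmlambda'}$ where $\bmlambda'$ is the all-single-cell tuple obtained from a choice; (3) check that in every term the total $q$-exponent contributed by the splitting is exactly $z$ minus the $e\cdot\lambda$-type correction, i.e.\ that the splitting weight and the $f_{I,m}$-indexing of Theorem~\ref{convthm} align on the nose; (4) verify the content-set hypotheses of Lemma~\ref{4.8} are met at each application, in particular that after removing earlier dominos the remaining $\lambda^{(i)},\lambda^{(i+1)}$ still have the stated contents (this is automatic since we never alter the diagonals, only regroup cells); (5) substitute the formula for $L(n,\lambda)=G_{\bmlambda'}$ from Theorem~\ref{convthm} and reorganize the double sum (over domino choices, then over $I$) into the single sum over $I$ with the parity constraint.

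The main obstacle will be step~(3): getting the exponent bookkeeping exactly right so that the $q$-shift attached to each choice in the Lemma~\ref{4.8} expansion combines with the $q^{-e\cdot\lambda}$ weights from Theorem~\ref{convthm} to yield a single clean $q^{-e\cdot\lambda-z}$. This requires being careful about two things: (a) how the partition $\lambda$ read off from $\bmlambda$ via $f'$ compares to the partition read off from the split tuple $\bmlambda'$ via $f$ (they should coincide, but one must check the $\leq$ versus $<$ convention in the definitions of $f$ and $f'$ handles the two coincident cells of a former domino correctly), and (b) that the ``below vs.\ alone'' choice in $\bmmu_1$ of Lemma~\ref{4.8} is the one that increments the relevant coordinate of $\lambda'$ by exactly the amount that turns $q^\epsilon$ into the right power of $q$ after passing to $L(n,\lambda)$. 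Once the indexing dictionary $I\leftrightarrow(\text{domino choices})$ is pinned down and shown to impose exactly the constraint $e_i=\zeta_i$ on $i\in K$, the rest is a direct substitution, and $2$-Schur positivity of each $f_{I,m}$ is inherited verbatim from Theorem~\ref{convthm}.
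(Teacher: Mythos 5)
The paper gives essentially no proof of this theorem beyond the sentence ``By Theorem \ref{convthm} and Lemma \ref{4.8}, one can show the following,'' so your proposal is an attempt to supply the missing details along the intended lines. The overall strategy is the right one, but the central mechanical step as you describe it does not work. You propose to ``run Lemma \ref{4.8} in reverse'' to split each domino of $\bmlambda$ into two single cells and thereby reduce $G^{(n)}_{\bmlambda}$ to the unicellular polynomial $L(n,\lambda)$ of Theorem \ref{convthm}. Lemma \ref{4.8} is a single linear relation $\tilde{G}_{\bmlambda}=\tilde{G}_{\bmmu_0}+q^{\epsilon}\tilde{G}_{\bmmu_1}$ expressing the \emph{two-single-cell} tuple as a sum of the horizontal-domino and vertical-domino tuples; it cannot be inverted to express one domino LLT polynomial by itself in terms of unicellular ones. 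The workable direction is the opposite: leave the dominos of $\bmlambda$ alone, apply Lemma \ref{4.8} forward to the remaining single cells of $\bmlambda$ (pairing a content-$0$ cell with an adjacent content-$1$ cell), and expand $\tilde{G}_{\bmlambda}$ as a sum of all-domino polynomials $\tilde{G}_{\bmeta_a}$ in which $a_i$ is forced to equal $\zeta_i$ for $i\in K$ and ranges freely otherwise; then convert each $\tilde{G}_{\bmeta_{a_I}}$ into $f_{I,m}$ via Theorem \ref{fim} and track the exponents (the $q^{-z}$ records that the vertical dominos already present in $\bmlambda$ never acquire the $q^{\epsilon}$ factor they would have contributed in the full unicellular expansion). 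Note also that in Lemma \ref{4.8} the exponent $\epsilon$ is the content of the cell $\lambda^{(i)}$, not an indicator of horizontal versus vertical, so your statement that ``every vertical domino contributes a factor of $q$'' misreads the lemma even though it points at the right final answer.

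A second gap: the forward application of Lemma \ref{4.8} only lands you directly on all-domino tuples when the single cells of $\bmlambda$ actually pair up into adjacent content-$0$/content-$1$ pairs, which corresponds to the case $\delta_{m-1}\subset\lambda\subset\delta_m$ of Theorem \ref{fim}. For a general $\bmlambda$ whose associated partition $\lambda$ is merely contained in $(n-m)^m$, you must interpolate using the linear relations of Theorem \ref{1convthm}, running the same minimal-counterexample/induction argument that the paper uses to pass from Lemma \ref{deltam} to the full Theorem \ref{convthm}. Your proposal quotes Theorem \ref{convthm} as if the splitting immediately produces the unicellular polynomial $L(n,\lambda)$ for arbitrary $\lambda$, and so it skips this step entirely. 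The positivity conclusion and the identification of the constraint $e_i=\zeta_i$ on $i\in K$ are correctly anticipated, but without fixing the direction of Lemma \ref{4.8} and adding the linear-relation induction, the argument as written does not go through.
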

By Theorem \ref{half} and Proposition \ref{less} we showed that $G^{(d)}_\bmlambda$ is $2$-Schur positive, showing Conjecture \ref{haiman} for $k=2$.

\section{Concluding Remarks}
One of direct corollary from Theorem \ref{convthm} is the $2$-Schur expansion of product of $1$-Schur functions. Note that a product of $k$-Schur function and $k'$-Schur function is conjecturally $(k+k')$-Schur positive.
\begin{cor}
For $m\leq n/2$ and a subset $I$ of $\{1,2,\ldots, [n/2]\}$, let $l$ be the size of $I$ and $l_1$ be the size of $I \cap \{1,2,\ldots,m\}$. Also, let $l_2$ be $\sum_{j\in I} j - { m\choose 2}$.
Then
$$\kso_{1^m}\kso_{1^{n-m}}= \sum_{I \subset \{1,2,\ldots, [n/2]\}} q^{-(n-m)l_1+l(n-l)- l_2} \kst_{2^l 1^{n-2l}} $$
\end{cor}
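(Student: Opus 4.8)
The plan is to derive the corollary as a direct specialization of Theorem~\ref{convthm}. First I would identify $\kso_{1^m}\kso_{1^{n-m}}$ with an appropriate LLT polynomial. Since a $1$-Schur function $\kso_{1^j}$ is just $h_j$ (equivalently, by Remark~\ref{1}, the Hall--Littlewood polynomial $H_{1^j}[X;q]$ which is independent of $q$), the product $\kso_{1^m}\kso_{1^{n-m}}$ equals $\omega$ of the LLT polynomial for a $2$-tuple of single rows, namely the tuple $\bmlambda$ in which $\lambda^{(0)}$ is a horizontal row of $m$ boxes on one diagonal and $\lambda^{(1)}$ is a horizontal row of $n-m$ boxes on the adjacent diagonal. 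Concretely this is the unicellular case: taking $\bmlambda$ to be $m$ single cells of content $0$ followed by $n-m$ single cells of content $1$ (arranged so the shifted contents interleave correctly), the associated partition $\lambda$ contained in $(n-m)^m$ is the full rectangle $\lambda=((n-m)^m)$, since every one of the $m$ content-$1$ boxes has all $n-m$ content-$0$ boxes strictly to its shifted-content left minus $d$. Thus $L(n,((n-m)^m)) = \kso_{1^m}\kso_{1^{n-m}}$.

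Next I would apply Theorem~\ref{convthm} with this $\lambda = ((n-m)^m)$. Here every part $\lambda_i = n-m$, so for a subset $I\subset[m]$ we get $e_I\cdot\lambda = (n-m)|I|$. But $m$ need not equal $[n/2]$, so I would invoke Proposition~\ref{less} to express each $f_{J,m}$ for $J\subset\{1,\dots,m\}$ as a sum of the $f_{I,[n/2]}$ over $I\subset\{1,\dots,[n/2]\}$ with $I\cap\{1,\dots,m\} = J$. Re-indexing the double sum by $I\subset\{1,\dots,[n/2]\}$ directly and setting $l_1 = |I\cap\{1,\dots,m\}|$, the exponent of $q$ coming from $q^{-e_J\cdot\lambda}$ becomes $-(n-m)l_1$. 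Then I would substitute the explicit value of $f_{I,[n/2]}$ from Theorem~\ref{half}: writing $l=|I|$ and $I=\{[n/2]-l+1-\mu_1,\dots,[n/2]-\mu_l\}$ for a partition $\mu$, Theorem~\ref{half} gives $f_{I,[n/2]} = q^{[n/2]\,l+|\mu|}\kst_{2^l1^{n-2l}}$ for even $n$ (and $q^{([n/2]+1)l+|\mu|}$ for odd $n$). A short computation identifies $[n/2]\,l + |\mu|$ (resp.\ its odd-$n$ analogue) with $l(n-l) - l_2$ where $l_2 = \sum_{j\in I} j - \binom{[n/2]}{2}$ — this is exactly the remark following Theorem~\ref{half} that $\max(ml+|\mu|) = l(n-l)$, made precise for each individual $I$ rather than just at the maximum. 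Combining, the coefficient of $\kst_{2^l1^{n-2l}}$ is $q^{-(n-m)l_1 + l(n-l) - l_2}$, which is the claimed formula.

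The one genuinely delicate point — and the step I expect to require the most care — is verifying the bookkeeping identity $f_{I,[n/2]} = q^{\,l(n-l)-l_2}\kst_{2^l1^{n-2l}}$ uniformly in $I$, i.e.\ that $[n/2]\,l + |\mu| = l(n-l) - l_2$ when $n$ is even and $([n/2]+1)l + |\mu| = l(n-l) - l_2$ when $n$ is odd, where $\mu$ and $I$ are related as in Theorem~\ref{half} and $l_2 = \sum_{j\in I}j - \binom{[n/2]}{2}$. Unwinding the relation $I=\{[n/2]-l+1-\mu_1,\dots,[n/2]-\mu_l\}$ gives $\sum_{j\in I} j = l\cdot[n/2] - \binom{l}{2} + \binom{l}{2}\text{-type correction} - |\mu|$ — more precisely $\sum_{j\in I}j = \sum_{r=1}^{l}([n/2]-l+r) - |\mu| = l[n/2] - l^2 + \binom{l+1}{2} - |\mu|$, so $|\mu| = l[n/2] - l^2 + \binom{l+1}{2} - \binom{[n/2]}{2} - l_2$, and substituting into $[n/2]l+|\mu|$ and simplifying against $l(n-l) = 2l[n/2]-l^2$ (for $n=2[n/2]$) yields the claim; the odd case absorbs the extra $l$ into the $n = 2[n/2]+1$ discrepancy. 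This is routine but error-prone arithmetic, so I would do it carefully and separately for the two parities. The rest of the argument — the identification of the product with an LLT polynomial and the application of Theorem~\ref{convthm} and Proposition~\ref{less} — is essentially immediate from the results already established.
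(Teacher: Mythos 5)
Your route is the same as the paper's: identify $\kso_{1^m}\kso_{1^{n-m}}$ with $L(n,(n-m)^m)$ via Remark \ref{1} (the two diagonals are non-attacking, so the LLT polynomial factors and the associated partition is the full rectangle, up to the harmless conjugation $L(n,\lambda)=L(n,\lambda')$), then apply Theorem \ref{convthm}, Proposition \ref{less} and Theorem \ref{half}. The paper's own proof is exactly this two-line citation, so structurally you have reproduced it, and with more detail.

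However, the one step you explicitly defer -- the identity $[n/2]\,l+|\mu| = l(n-l)-l_2$ -- does not close as you assert, and your own displayed intermediate formula already shows this. From $I=\{[n/2]-l+1-\mu_1,\ldots,[n/2]-\mu_l\}$ one gets $\sum_{j\in I}j = l[n/2]-l^2+\binom{l+1}{2}-|\mu|$, hence $[n/2]\,l+|\mu| = l(n-l)+\binom{l+1}{2}-\sum_{j\in I}j$ for $n$ even (and the same expression for $n$ odd after absorbing the extra $l$). So the exponent equals $l(n-l)-l_2$ only if $l_2=\sum_{j\in I}j-\binom{l+1}{2}$; with the statement's $l_2=\sum_{j\in I}j-\binom{m}{2}$ (or your substituted $\binom{[n/2]}{2}$) there is an uncancelled $\binom{l+1}{2}-\binom{m}{2}$. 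A sanity check confirms this: for $n=4$, $m=2$, $I=\emptyset$ the stated formula produces a term $q^{1}\kst_{1111}$, whereas Theorem \ref{convthm} gives $q^{0}\kst_{1111}$. So carrying out the ``routine but error-prone arithmetic'' would not yield the claim as written; it instead reveals that the binomial in the definition of $l_2$ must be $\binom{|I|+1}{2}$. You should either record that correction or flag the discrepancy -- as it stands, the proposal asserts an identity that is false for the stated $l_2$.
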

\begin{proof}
By Remark \ref{1}, the product $\kso_{1^m}\kso_{1^{n-m}}$ is equal to $L(n,(n-m)^m)$. Then the corollary directly follows from Theorem \ref{convthm} and Proposition \ref{less}.
\end{proof}

Also note that Theorem \ref{1convthm} does not require the condition $k=2$, so that some of results in this paper can be generalized. Roughly speaking, Theorem \ref{1convthm} says that exponents of $q$ are changing piece-wise linearly, especially for small $k$. However, the identification of LLT polynomials for rectangles and generalized Hall-Littlewood polynomials doesn't seem to help much for $k>2$ when working with linear relations. In fact, understanding LLT polynomials indexed by ribbons are much more helpful to determine many of unicellular LLT polynomials since it is easier to use Lemma 4.8 and its obvious generalization for the case of ribbons instead of dominos. Note that finding a Schur expansion of LLT polynomials indexed by ribbons is a challenging problem as it provides a formula for Schur expansion of Macdonald polynomials as well.



 

\end{document}